\newtheorem{theorem}{Theorem}[section]
\newtheorem{lemma}[theorem]{Lemma}
\theoremstyle{definition}
\newtheorem{definition}[theorem]{Definition}
\newtheorem{corollary}[theorem]{Corollary}
\newtheorem{proposition}[theorem]{Proposition}
\numberwithin{equation}{section}
\begin{document}

\title[$J$-Stability in non-Archimedean dynamics]{$J$-Stability of immediately expanding rational maps in non-Archimedean dynamical systems}


\author{LEE, Junghun}
\address{Graduate School of Mathematics, Nagoya University, 
Nagoya 464-8602, Japan}
\email{m12003v@math.nagoya-u.ac.jp}
\thanks{The author would like to thank to Professor Tomoki Kawahira.}


\subjclass[2010]{Primary 37P40, Secondly 11S82}

\date{November 15, 2014}

\begin{abstract}
The aim of this paper is to show $J$-stability of immediately expanding rational maps over an algebraically closed, complete, and non-Archimedean field, which is an analogue of R. Man\~e, P. Sad, and D. Sullivan's theorem of $J$-stability in complex dynamical systems.
\end{abstract}

\maketitle


\section{Introduction}

The theory of (discrete) dynamical systems investigates the iterations of a continuous map from a topological space to itself.
In the early twentieth century, P. Fatou and G. Julia developed the theory of complex dynamical systems. 
One of the most interesting topic in the theory of complex dynamical systems is the notion of the Fatou set and the Julia set because they are completely invariant under a given rational map so the dynamical systems on the Fatou set and the Julia set can be considered independently.
Moreover, the Fatou set and the Julia set are the stable locus and the chaotic locus of the complex dynamical systems, respectively.
See \cite{Miln06} for more details on the theory of complex dynamical systems.

In \cite{MSS83}, R. Man\~e, P. Sad, and D. Sullivan introduced the notion of $J$-stability of rational maps in the theory of complex dynamical systems.
Roughly speaking, $J$-stability means that the dynamical systems on the Julia sets of two given rational maps are dynamically equivalent if those two rational maps are close enough.
In the theory of complex dynamical systems, R. Man\~e, P. Sad, and D. Sullivan showed that a rational map is $J$-stable if it has a neighborhood in the set of rational maps on which the number of attracting cycles is constant.
See \cite{MSS83} and \cite{McMull94} for more details on $J$-stability in complex dynamical systems.

The purpose of this paper is to introduce an analogue of R. Man\~e, P. Sad, and D. Sullivan's theorem in {\it non-Archimedean dynamical systems}, which is a dynamics on the projective line over algebraically closed, complete, and non-Archimedean fields generated by a rational map over the same field.

In the theory of non-Archimedean dynamical systems, the {\it Fatou set} is defined as the largest open set on which the iterations of a rational map is equicontinuous and the {\it Julia set} is defined as the complement of the Fatou set with respect to the projective line.
Moreover, the Fatou set and the Julia set are completely invariant under a given rational map so the dynamical systems on the Fatou set and the Julia set can be considered independently as in the theory of complex dynamical systems.
We will briefly review a number of fundamental knowledge on the theory of non-Archimedean dynamical systems in Section $2$.

The theory of non-Archimedean dynamical systems was mainly developed by L-C. Hsia, R. Benedetto and J. Rivera-Letelier in the early twenty-first century.
For instance, an analogue of Montel's theorem in the theory of non-Archimedean dynamical systems, the existence of a polynomial map having a wandering domain and an analogue of the classification theorem of the Fatou component of the theory of complex dynamical systems in the theory of non-Archimedean dynamical systems were proved by L-C. Hsia in \cite{Hs00}, R. Benedetto in \cite{Ben02}, and J. Rivera-Letelier in \cite{Riv03}, respectively.
On the other hand, J. Kiwi and L-C. Hsia and  J-Y. Briend showed that the Julia sets of some polynomial maps are topologically conjugate to the symbolic dynamics in \cite{Kiw06} and \cite{BH12}, respectively.
In particular, J. Kiwi showed in \cite{Kiw06} that the dynamical systems of cubic polynomial maps on the completion of the field of formal Puiseux series is closely related to the structure of the parameter space of complex cubic polynomial near infinity.

The main theorem of this paper will be stated in the end of this section.
Let us begin with the definition of immediately expanding rational maps.

\begin{definition}\label{3.1}
Let $K$ be an algebraically closed field with a complete and non-Archimedean norm $|\cdot|$ and $f : \mathbb{P}^1_{K} \rightarrow \mathbb{P}^1_{K}$ be a non-constant rational map over $K$.
We say that the rational map $f$ is {\it immediately expanding} if it satisfies the following properties:
\begin{enumerate}
\item The Julia set $\mathcal{J}_f$ of $f$ is non-empty.
\item There exists a $\lambda > 1$ such that for any $z$ in $\mathcal{J}_f$, we have
$$
|f'(z)| \geq \lambda.
$$
\end{enumerate}
\end{definition}

Now we state the main theorem of this paper.

\begin{theorem}\label{3.2}
Let $K$ be an algebraically closed field of characteristic zero with a complete and non-Archimedean norm and $f : \mathbb{P}^1_{K} \rightarrow \mathbb{P}^1_{K}$ be a rational map over $K$ of degree $d$ which is greater than $2$.
Suppose that $f$ is immediately expanding and $\infty$ is not in the Julia set $\mathcal{J}_f$ of $f$.
Then there exists a neighborhood $U$ of $f$ in the set $\mathcal{R}_d$ of rational maps of degree $d$ such that for any $g$ in $U$, there exists a homeomorphism $h : \mathcal{J}_f \rightarrow \mathcal{J}_g$ such that for any $z$ in $\mathcal{J}_f$, we have
$$
h \circ f (z) = g \circ h (z).
$$
\end{theorem}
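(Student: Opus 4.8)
\emph{Proof strategy.} The plan is to regard $\mathcal{J}_f$ as a uniformly expanding invariant set and to build the conjugacy by a shadowing (structural stability) argument; the non-Archimedean setting makes this cleaner than its complex model, because the ultrametric inequality forces an expanding map to act as an exact scaling on small balls, so shadowing points arise as honest limits of nested balls with no loss of contraction. \emph{Step 1: uniform expansion and a controlled perturbation.} Since $\infty\notin\mathcal{J}_f$, the Julia set is a bounded closed subset of $K$ on which $f$ has no poles; fix a slightly larger bounded closed region $\Omega\subset K$ with the same property. Every $g$ close enough to $f$ in $\mathcal{R}_d$ has its poles close to those of $f$, hence none in $\Omega$, and is uniformly close to $f$ on $\Omega$ together with its derivative. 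From Definition~\ref{3.1} and the ultrametric inequality I extract $\delta_0>0$ and $\lambda>1$ so that for each $z\in\mathcal{J}_f$ and each $\delta\le\delta_0$ the restriction $f|_{B(z,\delta)}$ is a bijection onto a ball with $|f(x)-f(y)|=|f'(z)|\,|x-y|\ge\lambda|x-y|$; shrinking the neighbourhood $U$, the same holds for every $g\in U$ with some $\lambda'>1$, while $\varepsilon(g):=\sup_{z\in\mathcal{J}_f}|f(z)-g(z)|$ can be made arbitrarily small.

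\emph{Step 2: shadowing and the definition of $h$.} Fix $z\in\mathcal{J}_f$; by complete invariance its forward orbit $(f^n(z))_{n\ge0}$ lies in $\mathcal{J}_f$ and is an $\varepsilon(g)$-pseudo-orbit for $g$. Choosing $\delta\le\delta_0$ and then $g\in U$ with $\varepsilon(g)<\delta$, the balls $W_n:=B(f^n(z),\delta)$ satisfy $W_{n+1}\subseteq g(W_n)$ with $g|_{W_n}$ injective and expanding by at least $\lambda'$, so the sets $V_n:=\{x\in W_0: g^k(x)\in W_k\text{ for }0\le k\le n\}$ form a descending chain of balls of radii at most $\delta\lambda'^{-n}\to0$ and intersect, $K$ being complete, in a single point $h(z)$. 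Thus $|g^n(h(z))-f^n(z)|\le\delta$ for all $n$ and $|h(z)-z|\le\delta/\lambda'$, which tends to $0$ as $g\to f$ uniformly in $z$; $\delta$-expansion of $g$ makes the shadowing point unique. Because the entire forward $g$-orbit of $h(z)$ stays in the region of uniform expansion, $h(z)$ cannot lie in the Fatou set of $g$ (expansion on a ball is incompatible with equicontinuity there, a standard non-Archimedean fact), so $h(z)\in\mathcal{J}_g$. Applying uniqueness to the shifted orbit $(f^{n+1}(z))_n$ yields $h\circ f(z)=g\circ h(z)$ at once.

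\emph{Step 3: $h$ is a homeomorphism.} Continuity of $h$ follows from uniqueness and the expansion estimates: nearby base points have nearby orbits, hence (Lipschitz-)nearby shadowing points. Injectivity: if $h(z_1)=h(z_2)$ then $|f^n(z_1)-f^n(z_2)|\le\delta$ for all $n$, both orbits being $\delta$-shadowed by one $g$-orbit, and $\delta$-expansion of $f$ forces $z_1=z_2$. Surjectivity is the point needing most care. Once one knows $g$ is again immediately expanding with $\mathcal{J}_g$ contained in the region where $g$ is $\delta$-expanding, the symmetric shadowing construction produces a continuous $h':\mathcal{J}_g\to\mathcal{J}_f$ conjugating $g$ to $f$, and uniqueness of shadowing points forces $h'\circ h=\mathrm{id}_{\mathcal{J}_f}$ and $h\circ h'=\mathrm{id}_{\mathcal{J}_g}$, so $h$ and $h^{-1}=h'$ are both continuous, i.e.\ $h$ is a homeomorphism. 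Alternatively one uses that $\mathcal{J}_g$ is the closure of the repelling cycles of $g$, that every repelling cycle of $f$ persists under the perturbation and remains near $\mathcal{J}_f$, hence lies in $h(\mathcal{J}_f)$, and that $h(\mathcal{J}_f)$ is closed and $g$-invariant; this is plausibly where the hypothesis $d>2$ is consumed.

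\emph{Main obstacle.} The crux is to show that the Julia set does not grow under the perturbation, i.e.\ that $\mathcal{J}_g$ stays inside the $\delta$-expansion neighbourhood of $\mathcal{J}_f$, equivalently that $h$ is onto. One must rule out, for instance, that an indifferent cycle of $f$ lying in the Fatou set bifurcates into a repelling cycle of $g$, or that new non-wandering behaviour appears off $\mathcal{J}_f$; for this I would invoke the classification of Fatou components in non-Archimedean dynamics together with the persistence of the attracting cycles of $f$ and their basins, so that everything outside $\mathcal{J}_f$ is captured by attracting basins both for $f$ and for $g$. Establishing this robustness — which is precisely the substance of ``$J$-stability'' — is where the real work lies; Steps 1--2 and the injectivity and conjugacy in Step 3 are then comparatively routine.
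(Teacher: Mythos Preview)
Your Steps 1--2 and the injectivity/conjugacy parts of Step~3 are essentially the paper's argument, rephrased as forward shadowing rather than the paper's construction via iterated local inverse branches of $g$ (its Lemmas~\ref{4.5}--\ref{4.6}); the limit point you obtain is exactly the paper's $h_\infty(z)$.

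The genuine gap is precisely the one you flag: surjectivity. Both cures you propose are problematic. The symmetric map $h':\mathcal{J}_g\to\mathcal{J}_f$ can only be defined once you know $\mathcal{J}_g$ lies in the expansion region $\Omega$, which is the very thing to prove. Your alternative assumes $\mathcal{J}_g$ equals the closure of the repelling cycles of $g$; in the non-Archimedean world the available input (Hsia, Lemma~\ref{2.8}) only gives $\mathcal{J}_g\subset\overline{\{\text{periodic points of }g\}}$, and those periodic points are only known to be repelling once they are in $\Omega$---circular again. And invoking the full Fatou classification is far heavier than needed.

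The paper sidesteps all of this with a short device you are missing. It builds the conjugacy $h_\infty$ not on $\mathcal{J}_f$ but on the larger backward-invariant set $\Omega_{\infty,f}=\bigcap_k f^{-k}(\Omega)$, obtaining a homeomorphism $h_\infty:\Omega_{\infty,f}\to\Omega_{\infty,g}$ automatically. It then uses Hsia's theorem together with the expansion on $\Omega$ to produce a \emph{single} repelling periodic point $p\in\mathcal{J}_f$ (Lemma~\ref{4.9}); its image $h_\infty(p)$ is repelling periodic for $g$, hence lies in $\mathcal{J}_g$. Now Proposition~\ref{2.6}(4) says the Julia set is the closure of the backward orbit of any one of its points, so
\[
h_\infty(\mathcal{J}_f)=h_\infty\Bigl(\overline{\textstyle\bigcup_k f^{-k}(p)}\Bigr)=\overline{\textstyle\bigcup_k g^{-k}(h_\infty(p))}=\mathcal{J}_g,
\]
the middle equality holding because $h_\infty$ is a homeomorphism conjugating $f$ and $g$ on all of $\Omega_\infty$, hence carries backward orbits to backward orbits. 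This simultaneously proves $\mathcal{J}_g\subset\Omega$ and surjectivity, with no Fatou classification and no density of repelling cycles needed. (The hypothesis $d\ge 2$ enters here, to make the backward-orbit characterization of the Julia set available, not in the place you guessed.)
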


We say such a rational map $f$ is {\it $J$-stable} in the set $\mathcal{R}_d$ of rational maps of degree $d$.
In the statement of Theorem \ref{3.2}, the topology on the set of rational maps of degree $d$ is induced from the $2d - 1$ dimensional projective space over the same field.

We remark that Theorem \ref{3.2} holds for any topology on the set of rational maps of degree $d$ if the topology satisfies a certain property, see Lemma \ref{4.7}.
We also remark that the assumption that the infinity is not contained in the Fatou set is essential because of the existence of the Fatou set, see Proposition \ref{2.6}.

\subsection*{Contents of this paper}

In Section $2$, we will review fundamental knowledge such as the definitions of the projective line and the chordal metric and some properties of rational maps of non-Archimedean dynamical systems.

In Section $3$, we will give the definitions of the set of rational maps preserving a given subset and the sequence of subsets generated by a given rational map.
We will also see their properties as lemmas for the main theorem.

In Section $4$, we will prove the main theorem, Theorem \ref{3.2}.


\section{Preliminaries}

In this section, we will review some basic notions of non-Archimedean dynamical systems such as the definitions of projective line, the chordal metric, and the Julia set and some properties of rational maps.

Let $K$ be an algebraically closed field of characteristic zero with a complete, multiplicative, and non-Archimedean norm $|\cdot|$.
For instance, one can consider $K$ as the field of $p$-adic complex numbers with its $p$-adic norm or the completion of the field of formal Puiseux series with its natural norm, see \cite{Rob00} or \cite[Section $2.1$]{Kiw06}.
Let us first see the definitions of the projective line over $K$ and its chordal metric.

\begin{definition}\label{2.1}
The {\it projective line} $\mathbb{P}^1_{K}$ over $K$ is defined as the union of $K$ and the set of infinity $\infty$, that is, 
$$
\mathbb{P}^1_K := K \cup \{ \infty \}.
$$

Moreover, the {\it chordal metric} $\rho$ on $\mathbb{P}^1_{K}$ is defined as follows: For any $z$ and $w$ in $\mathbb{P}^1_{K}$, 
\begin{align*}
\rho(z, w) :=
\begin{cases}
\displaystyle {|z - w| \over \max\{1, |z| \} \cdot \max\{1, |w| \}} \quad &(z, w \in K),\\\\
\displaystyle {1 \over \max\{1, |z| \}} \quad &(z \in K, w = \infty).
\end{cases}
\end{align*}
\end{definition}

Next let us recall the definition of rational maps.

\begin{definition}\label{2.2}
Let $d$ be a natural number.
Then we say that $f$ is a {\it rational map} over $K$ of degree $d$ if there exist two variables homogeneous polynomial maps $f_1$ and $f_2$ over $K$ of degree $d$ such that $f_1$ and $f_2$ has no common factors and $f$ is determined by $f_1$ and $f_2$.
\end{definition}

Thus, we can consider a rational map over $K$ of degree $d$ as the quotient of two polynomial maps, of which the maximal degree is $d$, over $K$ with no common factors.
We shall use $\mathcal{R}_d$ to denote the set of rational maps over $K$ of degree $d$.
One can easily check that the set of rational maps over $K$ of degree $d$ can be naturally identified with an open subset of $2d + 1$ dimensional projective space over $K$, see \cite[Proposition $2.13$ or Section $4.3$]{Silv07} for more details.

In the rest of this section, we will mainly consider properties of rational maps.
We shall use $B_{a, r}$ to denote the {\it closed ball} in $K$ centered at $a$ with radius $r$, that is, 
$$
B_{a, r} := \{ z \in K \mid |z - a | \leq r \}.
$$ 

Let us begin with topological properties such as continuity and openness of rational maps.

\begin{proposition}\label{2.3}
Let $f : \mathbb{P}^1_K \rightarrow \mathbb{P}^1_K$ be a non-constant rational map over $K$.
Then we have the followings:
\begin{enumerate}
\item The rational map $f$ is continuous and open with respect to the chordal metric $\rho$.
\item If $f$ has no poles and zeros in $B_{a, r}$, then we have
$$
|f(z)| = |f(a)|
$$
for any $z$ in $B_{a, r}$.
\end{enumerate}
\end{proposition}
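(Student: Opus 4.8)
The plan is to establish (2) first, since it is the computational heart of both claims. The tool is the ultrametric ``isosceles triangle'' principle: if $|u| \neq |v|$ then $|u - v| = \max\{|u|,|v|\}$. Write $f = P/Q$ with $P, Q \in K[z]$ coprime, and factor them over the algebraically closed field $K$ into linear factors. The hypothesis that $f$ has no zeros and no poles in $B_{a,r}$, together with coprimality, means every root $\gamma$ of $P$ or of $Q$ satisfies $|\gamma - a| > r$. Then for $z \in B_{a,r}$ and any such $\gamma$, the inequality $|z - a| \le r < |\gamma - a|$ forces $|z - \gamma| = |\gamma - a|$; taking the product over all roots of $P$ and of $Q$ yields $|P(z)| = |P(a)|$ and $|Q(z)| = |Q(a)|$, hence $|f(z)| = |f(a)|$.

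For the continuity half of (1), I would work in the two affine charts $\{|z| \le 1\}$ and $\{|1/z| \le 1\}$ covering $\mathbb{P}^1_K$, using that on each bounded affine region the chordal metric $\rho$ is topologically equivalent to the metric induced by $|\cdot|$ (indeed $\rho(z,w) = |z - w|$ whenever $|z|, |w| \le 1$). On the complement of its poles $f$ is given by the rational function $P/Q$, which is continuous for $|\cdot|$; at a pole or at $\infty$ one conjugates by the coordinate change $w = 1/z$, under which $f$ again becomes a rational function without a pole at the point in question, and one argues as before. Equivalently, writing $f = [f_1 : f_2]$ in homogeneous coordinates, the resultant of the coprime forms $f_1, f_2$ is nonzero, which gives a bound $\max\{|f_1(x,y)|,|f_2(x,y)|\} \ge c \max\{|x|,|y|\}^d$ for some $c > 0$ and makes the estimate $\rho(f(P), f(Q)) \le C\, \rho(P,Q)$ transparent.

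The openness half of (1) is the real point, and I expect it to be the main obstacle. The plan is to pre- and post-compose with M\"obius transformations, which are homeomorphisms of $(\mathbb{P}^1_K, \rho)$, so as to reduce to the following: if $f(0) = 0$, then $f(B_{0,\epsilon})$ contains a ball around $0$ for all small $\epsilon$. Since $f$ has no pole at $0$, it admits a power series expansion $f(z) = c_m z^m + c_{m+1}z^{m+1} + \cdots$ with $c_m \neq 0$, convergent on a ball of positive radius (the distance from $0$ to the nearest pole). Choosing $\epsilon$ small enough that $|c_m|\epsilon^m$ strictly dominates $|c_i|\epsilon^i$ for every $i > m$, the computation from (2) gives $|f(z)| = |c_m|\,|z|^m$ on $B_{0,\epsilon}$, so $f(B_{0,\epsilon}) \subseteq B_{0, |c_m|\epsilon^m}$; conversely, for any $w$ with $|w| \le |c_m|\epsilon^m$, a Newton-polygon argument over the complete, algebraically closed field $K$ produces a solution of $f(z) = w$ with $|z| = (|w|/|c_m|)^{1/m} \le \epsilon$. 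Hence $f(B_{0,\epsilon}) = B_{0,|c_m|\epsilon^m}$, and since such balls form a basis for the $\rho$-topology, $f$ is open. The steps that require genuine care are the convergence of the local expansion and the precise form of the Newton-polygon solvability statement; everything else is bookkeeping with the ultrametric inequality.
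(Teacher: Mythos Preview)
Your argument is correct, but there is nothing in the paper to compare it against: the paper's ``proof'' of this proposition consists entirely of the sentence ``See \cite[Corollary 5.17 and Theorem 5.13]{Silv07} for the proofs.'' You have in effect reconstructed the standard arguments from that reference. The factorization-and-isosceles-triangle proof of (2) is exactly Silverman's Theorem~5.13; the resultant lower bound $\max\{|f_1|,|f_2|\} \ge c\,\max\{|x|,|y|\}^d$ is how Silverman proves Lipschitz continuity (his Theorem~5.13 / Corollary~5.17); and your local picture $f(B_{0,\epsilon}) = B_{0,|c_m|\epsilon^m}$ via the Newton polygon is his Proposition~5.16, from which openness is immediate. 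So your route and the cited route coincide.

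One small remark on (2): invoking coprimality is the right reflex, but note that once you assume \emph{both} no zeros and no poles on $B_{a,r}$, the conclusion that $P$ and $Q$ are nonvanishing there follows even without it (if $P(z_0)=0$ then either $Q(z_0)\ne 0$, giving a zero of $f$, or $Q(z_0)=0$, giving a common factor). Coprimality is what makes the phrase ``the zeros/poles of $f$'' unambiguous in the first place, so it is harmless to mention, but it is not doing independent work in the estimate.
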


\begin{proof}[Proof of Proposition \ref{2.3}]
See \cite[Corollary $5.17$ and Theorem $5.13$]{Silv07} for the proofs.
\end{proof}

Next, we recall the definitions of the Fatou set and the Julia set.
We shall use $f^k$ to denote the {\it $k$-th iteration} of a given map $f : \mathbb{P}^1_K \rightarrow \mathbb{P}^1_K$, that is, 
$$
f^k := \underbrace{f \circ f \circ \cdots \circ f}_{k \text{ times}}.
$$
\begin{definition}\label{2.4}
Let $f : \mathbb{P}^1_{K} \rightarrow \mathbb{P}^1_{K}$ be a non-constant rational map over $K$.
\begin{enumerate}
\item The {\it Fatou set} $\mathcal{F}_f$ of $f$ is defined as the largest open set in $\mathbb{P}^1_{K}$ on which the family $\{ f^k \}_{k = 0}^{\infty}$ of the iterations of $f$ is equicontinous with respect to the chordal metric $\rho$.
\item The {\it Julia set} $\mathcal{J}_f$ of $f$ is defined as the complement of the Fatou set with respect to the projective line $\mathbb{P}^1_K$.
\end{enumerate}
\end{definition}

The Fatou set and the Julia set have properties as follows:

\begin{proposition}\label{2.6}
Let $f : \mathbb{P}^1_{K} \rightarrow \mathbb{P}^1_{K}$ be a non-constant rational map over $K$.
Then we have the followings:
\begin{enumerate}
\item The Fatou set $\mathcal{F}_f$ of $f$ is non-empty.
\item The Julia set $\mathcal{J}_f$ is completely invariant, that is, 
$$
f(\mathcal{J}_f) = \mathcal{J}_f \quad \text{and} \quad f^{-1}(\mathcal{J}_f) = \mathcal{J}_f.
$$
\item The Julia set $\mathcal{J}_f$ of $f$ is equal to the Julia set $J_{f^k}$ of $f^k$ for any $k$ in $\mathbb{N}$.
\item The Julia set $\mathcal{J}_f$ is equal to the topological closure of the backward orbit of any point in $\mathcal{J}_f$, that is, for any $z$ in $\mathcal{J}_f$, we have
$$
\mathcal{J}_f = \overline{\bigcup_{k = 0}^{\infty} f^{-k}(\{ z \})}.
$$
\end{enumerate}
\end{proposition}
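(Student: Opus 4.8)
The plan is to deduce all four assertions from the definitions together with the continuity and openness of $f$ (Proposition~\ref{2.3}(1)) and a handful of standard facts about rational maps over $K$, which I would cite from \cite{Silv07}: that $f$ is surjective on $\mathbb{P}^1_K$; that $f$, and hence every iterate $f^k$, is Lipschitz (in particular uniformly continuous) for the chordal metric $\rho$; and the fixed-point index identity $\sum_{f(z_0)=z_0}(1-f'(z_0))^{-1}=1$, the sum being over all fixed points counted with multiplicity, valid when no multiplier equals $1$ since $\operatorname{char}K=0$.

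For part~(1), if $\mathcal{J}_f=\emptyset$ then $\mathcal{F}_f=\mathbb{P}^1_K\neq\emptyset$, so assume $\mathcal{J}_f\neq\emptyset$. I first produce a non-repelling fixed point $z_0$, i.e.\ one with $|f'(z_0)|\le1$: the map $f$ has at least two fixed points counted with multiplicity, and either one of them has multiplier $1$ (so $|f'(z_0)|=1$), or all multipliers differ from $1$ and the index identity, read with the ultrametric inequality, forces $|1-f'(z_0)|\le1$ for some fixed point $z_0$, whence $|f'(z_0)|\le1$. After conjugating by the $\rho$-isometry $z\mapsto 1/z$ if necessary, I may assume $z_0\in K$; then $f$ is a convergent power series $f(z)=z_0+\sum_{j\ge1}c_j(z-z_0)^j$ near $z_0$ with $|c_1|\le1$, and factoring $(z-z_0)^j-(w-z_0)^j$ the ultrametric inequality shows that for $r$ small enough $f$ is $1$-Lipschitz on $B_{z_0,r}$ and maps $B_{z_0,r}$ into itself. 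Hence every $f^k$ is $1$-Lipschitz on $B_{z_0,r}$, the family $\{f^k\}$ is equicontinuous there, and $B_{z_0,r}\subseteq\mathcal{F}_f$.

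For part~(2) it is enough to prove $f^{-1}(\mathcal{F}_f)\subseteq\mathcal{F}_f$ and $f(\mathcal{F}_f)\subseteq\mathcal{F}_f$; taking complements these give $f(\mathcal{J}_f)\subseteq\mathcal{J}_f$ and $f^{-1}(\mathcal{J}_f)\subseteq\mathcal{J}_f$, so $f^{-1}(\mathcal{J}_f)=\mathcal{J}_f$, and applying $f$ and using surjectivity, $f(\mathcal{J}_f)=\mathcal{J}_f$. If $f(z_0)\in\mathcal{F}_f$, composing the equicontinuity of $\{f^k\}$ at $f(z_0)$ with the continuity of $f$ at $z_0$ shows $\{f^{k+1}\}_{k\ge0}$, hence $\{f^k\}_{k\ge0}$, is equicontinuous at $z_0$. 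Conversely, given $z_0\in\mathcal{F}_f$ and $w_0=f(z_0)$, the subfamily $\{f^{k+1}\}_{k\ge0}$ is equicontinuous at $z_0$; openness of $f$ provides a ball around $w_0$ every point of which has a preimage in a prescribed ball around $z_0$, and since $f^k\circ f=f^{k+1}$ this transports the equicontinuity to $w_0$, so $w_0\in\mathcal{F}_f$. Part~(3) amounts to $\mathcal{F}_f=\mathcal{F}_{f^k}$: the inclusion $\mathcal{F}_f\subseteq\mathcal{F}_{f^k}$ holds because $\{(f^k)^n\}_{n\ge0}$ is a subfamily of $\{f^n\}_{n\ge0}$, and the reverse follows by writing $m=kn+r$ with $0\le r<k$ and $f^m=f^r\circ f^{kn}$, the finitely many maps $f^0,\dots,f^{k-1}$ being uniformly continuous on $(\mathbb{P}^1_K,\rho)$.

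For part~(4), put $O^-(z)=\bigcup_{k\ge0}f^{-k}(\{z\})$. By part~(2), $f^{-k}(\mathcal{J}_f)=\mathcal{J}_f$, so $O^-(z)\subseteq\mathcal{J}_f$, and $\mathcal{J}_f$ being closed gives $\overline{O^-(z)}\subseteq\mathcal{J}_f$. For the reverse inclusion, suppose some $w_0\in\mathcal{J}_f$ has a ball neighbourhood $V$ disjoint from $\overline{O^-(z)}$. Then $f^k(V)\cap f^{-j}(\{z\})=\emptyset$ for all $k,j\ge0$ — otherwise $z\in f^{k+j}(V)$, i.e.\ $V$ meets $f^{-(k+j)}(\{z\})\subseteq O^-(z)$ — so every $f^k(V)$ omits the set $O^-(z)$. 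Since $z\in\mathcal{J}_f$ it is not an exceptional point (exceptional points have finite backward orbit and lie in $\mathcal{F}_f$), so $O^-(z)$ is infinite; choosing two distinct points of it and applying the non-Archimedean Montel theorem of Hsia \cite{Hs00} to the family $\{f^k|_V\}$, which omits both, I conclude $\{f^k\}$ is equicontinuous on $V$, so $V\subseteq\mathcal{F}_f$, contradicting $w_0\in\mathcal{J}_f$. Hence $\mathcal{J}_f\subseteq\overline{O^-(z)}$. The step I expect to be the main obstacle is this last one: parts~(2) and~(3) are formal consequences of continuity, openness and the Lipschitz property, and the only non-routine ingredient in~(1) is the index identity, whereas~(4) genuinely rests on the non-Archimedean Montel theorem; one must invoke the correct form of it (a family of rational maps on a disc omitting two points is equicontinuous) and make use of the non-exceptionality of $z$, and the argument is more delicate than its complex counterpart since $\mathcal{J}_f$ may have empty interior.
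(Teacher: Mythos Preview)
The paper does not give a proof of this proposition at all: its entire proof reads ``See \cite[Corollary 5.19]{Silv07} and \cite[Proposition 5.18 and Corollary 5.32]{Silv07} for the proofs.'' Your proposal, by contrast, supplies a self-contained argument, and the steps you outline are correct and are essentially the standard ones found in those references: the holomorphic fixed-point index formula together with the ultrametric inequality to locate a non-repelling fixed point for part~(1), continuity and openness of $f$ plus uniform continuity of the finitely many $f^0,\dots,f^{k-1}$ for parts~(2) and~(3), and Hsia's non-Archimedean Montel theorem for part~(4). What you gain is independence from the reference; what the paper gains is brevity, since these facts are used only as background for Theorem~\ref{3.2}.

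One small caveat: your argument for~(4) tacitly assumes $\deg f\ge 2$ when you assert that a point with finite backward orbit must lie in $\mathcal{F}_f$; for a degree-one loxodromic map the repelling fixed point has backward orbit equal to itself yet lies in $\mathcal{J}_f$. This is harmless for the paper, which only invokes the proposition when $\deg f\ge 2$, and in that range your appeal to the non-exceptionality of Julia points and to Hsia's two-point Montel theorem is correct.
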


\begin{proof}[Proof of Proposition \ref{2.6}]
See \cite[Corollary $5.19$]{Silv07} and \cite[Proposition $5.18$ and Corollary $5.32$]{Silv07} for the proofs.
\end{proof}

It is well-known that the Julia set is closely related to the set of repelling periodic points.
Moreover, it will be useful in the proof of the main theorem.
Let us first recall the definitions of periodic points and repelling periodic points.

\begin{definition}\label{2.7}
Let $f : \mathbb{P}^1_K \rightarrow \mathbb{P}^1_K$ be a non-constant rational map over $K$.
\begin{enumerate}
\item We say that a point $p$ in $\mathbb{P}^1_K$ is a {\it periodic point} of $f$ with period $q$ if
$$
f^{q}(p) = p.
$$
\item We say that a periodic point $p$ in $K$ with period $q$ is repelling if 
$$
|(f^q)'(p)| > 1.
$$
\end{enumerate}
\end{definition}

\begin{lemma}\label{2.8}
Let $f : \mathbb{P}^1_K \rightarrow \mathbb{P}^1_K$ be a non-constant rational map over $K$.
Then we have the followings:
\begin{enumerate}
\item Every repelling periodic point is contained in the Julia set $\mathcal{J}_f$ of $f$.
\item The Julia set $\mathcal{J}$ of $f$ is contained in the topological closure of the set of periodic points of $f$. 
\end{enumerate}
\end{lemma}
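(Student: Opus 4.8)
The plan is to prove the two parts by rather different means. For part (1), I would first reduce to the case of a fixed point: if $p \in K$ is a repelling periodic point of period $q$, set $g := f^{q}$, so that $g(p) = p$ and $\lambda := |g'(p)| > 1$, and by Proposition~\ref{2.6}(3) it suffices to show $p \in \mathcal{J}_{g}$, i.e. that $\{g^{k}\}_{k \geq 0}$ is not equicontinuous at $p$. The key local estimate comes from Proposition~\ref{2.3}(2): writing $g(z) - p = (z-p)\,h(z)$ with $h$ rational and $h(p) = g'(p)$, the function $h$ has neither zeros nor poles on a small enough closed ball $B_{p,r}$, so $|h(z)| = \lambda$ there, hence $|g(z) - p| = \lambda\,|z-p|$ for all $z \in B_{p,r}$. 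Iterating this identity shows that the forward orbit of any $z \in B_{p,r}\setminus\{p\}$ leaves $B_{p,r}$ after finitely many steps, landing (for $r$ chosen small) at a point whose chordal distance to $p$ is bounded below by a constant depending only on $p$ and $r$. Since such $z$ occur arbitrarily near $p$, equicontinuity at $p$ fails, and therefore $p \in \mathcal{J}_{g} = \mathcal{J}_{f}$.

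Part (2) is the non-Archimedean counterpart of Fatou's classical theorem that periodic points are dense in the Julia set, and its proof rests on the non-Archimedean Montel theorem of Hsia \cite{Hs00}, to the effect that a family of rational maps on a ball whose images all avoid one common ball is equicontinuous. I would argue by contradiction: suppose a ball $D$ with $D \cap \mathcal{J}_{f} \neq \emptyset$ contains no periodic point of $f$, so $f^{n}(z) \neq z$ on $D$ for every $n \geq 1$. Because $D$ meets the Julia set, $\{f^{n}|_{D}\}_{n \geq 1}$ is not equicontinuous, so by the contrapositive of Hsia's theorem $\bigcup_{n \geq 1} f^{n}(D)$ meets every ball of $\mathbb{P}^{1}_{K}$. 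One then upgrades this to $\bigcup_{n \geq 1} f^{n}(D) \supseteq \mathbb{P}^{1}_{K}\setminus E$ with $E$ finite — using that the complement of $\bigcup_{n} f^{n}(D)$ is backward invariant and that a rational map of degree at least $2$ has at most two exceptional points — and combines this with the absence of periodic points in $D$ to reach a contradiction. In practice one may simply invoke \cite{Hs00} (or \cite[Chapter~5]{Silv07}) for this part, and likewise the standard references for part (1).

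I expect the genuinely delicate point to be the Montel step in part (2), and it is worth recording why the classical complex-analytic proof does not transcribe directly. Over $\mathbb{C}$ one applies Montel to the cross-ratio of $f^{n}$ with two local inverse branches of $f^{2}$, exploiting that a family omitting three points of $\mathbb{P}^{1}$ is normal; but the non-Archimedean Montel theorem really does require the family to avoid a whole ball — simple examples of non-equicontinuous families omitting two points show this is not a weakness of the statement — so the three-point device is unavailable, and the argument must instead be organized so as to produce or exploit an honestly omitted ball. This rearrangement, carried out in \cite{Hs00}, is the technical heart of part (2); part (1), by contrast, is an entirely local and elementary computation once Proposition~\ref{2.3}(2) is in hand.
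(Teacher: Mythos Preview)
Your proposal is correct, and in fact goes well beyond what the paper does: the paper's ``proof'' of Lemma~\ref{2.8} is nothing more than a pointer to \cite[Proposition~5.20]{Silv07} for part~(1) and to \cite[Theorem~3.1]{Hs00} or \cite[Theorem~5.37]{Silv07} for part~(2), with no argument given.

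Your sketch of part~(1) is the standard one and is exactly what lies behind the cited reference: reduce to a repelling fixed point via Proposition~\ref{2.6}(3), use Proposition~\ref{2.3}(2) to get the local isometric expansion $|g(z)-p|=\lambda\,|z-p|$ on a small ball, and conclude non-equicontinuity. For part~(2) you correctly identify Hsia's non-Archimedean Montel theorem as the engine and give the right overall architecture; your step from ``$\bigcup_n f^n(D)$ meets every ball'' to a contradiction is left deliberately vague, and indeed the actual argument in \cite{Hs00} is organized somewhat differently (one works with local inverse branches on a suitably shrunk disk rather than directly with the forward images), but since you explicitly defer the details to \cite{Hs00} and \cite[Chapter~5]{Silv07}---the same sources the paper cites---there is no discrepancy. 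Your closing remark on why the complex three-point Montel device fails in the non-Archimedean setting is accurate and a nice addition.
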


\begin{proof}[Proof of Lemma \ref{2.8}]
See \cite[Proposition $5.20$]{Silv07} and \cite[Theorem 3.1]{Hs00} or \cite[Theorem 5.37]{Silv07} for the proofs.
\end{proof}

It is important to find zeros of a given rational map because a backward invariant subset of a given rational map will be considered in Section $3$.
The existence and locus of zeros can be calculated by the following lemma.

\begin{lemma}\label{2.9}
Let $f : \mathbb{P}^1_{K} \rightarrow \mathbb{P}^1_{K}$ be a non-constant rational map over $K$, $a$ be an element in $K$ with $f(a) \neq \infty$, and $r$ be a positive real number.
Suppose that for any $z$ in $B_{a, r}$, we have
$$
f(z) = a_0 + a_1 \cdot (z - a) + a_2 \cdot (z - a)^2 + \cdots
$$ 
where $\{ a_k \}_{k = 0}^{\infty}$ is a sequence in $K$.
Setting a natural number $l$ by 
$$
\max\{ m \in \{ 0, 1, \cdots \} \mid \forall n \in \{0, 1, \cdots \},  | a_{m} | \cdot r^{m} \geq | a_{n} | \cdot r^{n} \},
$$
we have exactly $l$ elements $\{ \alpha_n \}_{n = 1}^{l}$ in $B_{a, r}$, counted with multiplicity, such that for any $n$ in $\{ 1, 2, \cdots, l \}$, we have
$$
f(\alpha_n) = 0.
$$
In particular, if the natural number $l$ is equal to $0$, the rational map $f$ has no zeros in $B_{a, r}$.
\end{lemma}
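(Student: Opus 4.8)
\emph{Proof proposal.} The plan is to reduce the statement to the classical theory of Newton polygons --- equivalently, to the Weierstrass preparation theorem --- for convergent power series over the complete non-Archimedean field $K$; this material is standard (see e.g.\ \cite{Rob00}). First I would normalize: after replacing $f$ by the rational map $z \mapsto f(z + a)$ of the same degree, we may assume $a = 0$, so that $f(z) = \sum_{k \ge 0} a_k z^k$ for every $z$ in $B_{0,r}$. Next I would record two preliminary observations. Write $f = P/Q$ in lowest terms. Since $f(0) \ne \infty$ we have $Q(0) \ne 0$, and a zero of $Q$ in $B_{0,r}$ would be a pole of $f$ there (as $P$ and $Q$ have no common factor), which is impossible because $f$ is given on $B_{0,r}$ by a convergent power series; hence $Q$ has no zero in $B_{0,r}$, and being a polynomial it has no zero in some strictly larger ball $B_{0,r'}$ with $r' > r$. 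It follows that $f$ is analytic on $B_{0,r'}$, that its Taylor expansion at $0$ converges there, and in particular that $|a_k| r^k \to 0$; so $\max_k |a_k| r^k$ is attained, and positive since $f$ is non-constant, and the index $l$ in the statement is a well-defined finite natural number. Second, by Proposition \ref{2.3}(2) the absolute value $|Q|$ is the nonzero constant $|Q(0)|$ on $B_{0,r}$, so for any $\alpha$ in $B_{0,r}$ the multiplicity of $\alpha$ as a zero of the rational map $f$ equals its multiplicity as a zero of the power series $g(z) := \sum_{k} a_k z^k$, hence as a root of $P$.

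After these reductions I would invoke the Weierstrass preparation theorem applied to the convergent power series $g$ on $B_{0,r}$: with $l$ the largest index attaining $\max_k |a_k| r^k$, one factors $g = \pi \cdot u$ on $B_{0,r}$, where $\pi$ is a polynomial of degree $l$ all of whose roots lie in $B_{0,r}$ and $u$ is a power series that converges and is nowhere zero on $B_{0,r}$. Since $u(\alpha) \ne 0$ for every $\alpha$ in $B_{0,r}$, the zeros of $g$ --- equivalently, by the multiplicity observation above, those of $f$ --- in $B_{0,r}$, counted with multiplicity, are precisely the $l$ roots of $\pi$; this is exactly the assertion of the lemma, the case $l = 0$ being the statement that $f$ has no zero in $B_{0,r}$. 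Equivalently, one could avoid power series altogether and apply the Newton polygon directly to the polynomial $P$: its zeros in $B_{0,r}$ are exactly those of $f$ there, the standard Newton polygon theorem reads off their number from the coefficients of $P$, and a short computation using the identity $P = Qg$ --- together with the fact that $Q$, being zero-free on $B_{0,r}$, contributes no roots and only rescales by its constant Gauss norm --- matches that number with the index $l$ defined from the $a_k$.

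The only step here that is not bookkeeping is the Newton-polygon / Weierstrass input itself. Concretely, the hard part is to show that a power series $g = \sum_k a_k z^k$ convergent on $B_{0,r}$ with $l \ge 1$ genuinely has a zero in $B_{0,r}$, so that a linear factor $z - \alpha$ with $\alpha \in B_{0,r}$ can be split off; iterating this, and checking that the ``tilted Gauss norm'' $g \mapsto \max_k |a_k| r^k$ is multiplicative so that the index drops by exactly one at each step, yields the factorization $g = \pi \cdot u$ above. Producing the root is where completeness of $K$ is used --- via Newton's method or an equivalent successive-approximation scheme. Everything else --- passing from $f$ to $P/Q$, the constancy of $|Q|$ on the ball from Proposition \ref{2.3}(2), and the matching of multiplicities --- is routine.
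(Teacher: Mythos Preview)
Your proposal is correct and aligns with the paper's approach: the paper does not give an independent argument but simply cites \cite[Theorem~5.11]{Silv07}, which is precisely the non-Archimedean Weierstrass preparation\,/\,Newton polygon result you invoke. Your write-up in fact supplies more detail than the paper (the reduction from the rational map $f$ to the convergent power series via the pole-free denominator $Q$, and the verification that $l$ is well defined), but the core input is the same.
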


\begin{proof}[Proof of Lemma \ref{2.9}]
See \cite[Theorem $5.11$]{Silv07} for the proof.
\end{proof}

As an immediate consequence of Lemma \ref{2.9}, the following corollary, which implies that the image of a pole-free closed ball under a given rational map is also a closed ball, can be obtained.

\begin{corollary}\label{2.10}
Let $f : \mathbb{P}^1_{K} \rightarrow \mathbb{P}^1_{K}$ be a non-constant rational map over $K$, $a$ be an element in $K$ with $f(a) \neq \infty$, and $r$ be a positive real number.
Suppose that for any $z$ in $B_{a, r}$, we have
$$
f(z) = a_0 + a_1 \cdot (z - a) + a_2 \cdot (z - a)^2 + \cdots
$$ 
where $\{ a_k \}_{k = 0}^{\infty}$ is a sequence in $K$.
Setting a natural number $l$ by 
$$
\max\{ m \in \mathbb{N} \mid \forall n \in \mathbb{N},  | a_{m} | \cdot r^{m} \geq | a_{n} | \cdot r^{n} \},
$$
we have 
$$
f(B_{a, r}) = B_{f(a), |a_l| \cdot r^{l}}.
$$
\end{corollary}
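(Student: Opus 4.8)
The plan is to bootstrap from Lemma \ref{2.9} by a translation argument: Lemma \ref{2.9} describes the number of zeros of a rational map in $B_{a,r}$, and to locate the points mapping to an arbitrary $b$ we simply apply it to $f - b$. First I would fix an arbitrary $b$ in $K$ and consider the rational map $f(z) - b$, whose power series on $B_{a,r}$ is $(a_0 - b) + a_1(z-a) + a_2(z-a)^2 + \cdots$; note that subtracting the constant $b$ does not change any coefficient $a_k$ for $k \geq 1$, and in particular does not change $l$ as defined in the statement (which only depends on the tail $a_1, a_2, \dots$), provided $|a_0 - b|$ is not large enough to become the dominant term. The dichotomy is therefore: if $|b - a_0| \leq |a_l| \cdot r^l$, then the maximizing index for $f - b$ on $B_{a,r}$ is still $l$, so Lemma \ref{2.9} gives exactly $l$ preimages of $b$ in $B_{a,r}$ (counted with multiplicity), hence $b \in f(B_{a,r})$; if $|b - a_0| > |a_l| \cdot r^l$, then the constant term strictly dominates every other term, the maximizing index becomes $0$, and Lemma \ref{2.9} yields $0$ preimages, so $b \notin f(B_{a,r})$.

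Combining the two cases gives exactly
$$
f(B_{a,r}) = \{ b \in K \mid |b - a_0| \leq |a_l| \cdot r^l \} = B_{a_0, |a_l| \cdot r^l},
$$
and since $a_0 = f(a)$ this is $B_{f(a), |a_l| \cdot r^l}$, as claimed. One should also observe that $\infty \notin f(B_{a,r})$: the hypothesis $f(a) \neq \infty$ together with the convergence of the power series on all of $B_{a,r}$ (which is what the statement implicitly asserts) means $f$ has no poles in $B_{a,r}$, so the image stays in $K$ and the computation above is exhaustive.

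The only genuinely delicate point is the behavior of the index $l$ under the perturbation $a_0 \mapsto a_0 - b$ in the boundary case $|b - a_0| = |a_l| \cdot r^l$: here the index $0$ term ties with the index $l$ term for the maximum, so one must confirm that Lemma \ref{2.9}, which selects $l$ as the \emph{largest} maximizing index, still returns $l$ and hence still produces a preimage — this is exactly why the lemma is phrased with the $\max$, and it is what makes the image a \emph{closed} ball rather than an open one. Everything else is the routine non-Archimedean estimate that a sum is dominated by its unique largest term. I would present the case split as the body of the proof and relegate this boundary observation to a sentence.
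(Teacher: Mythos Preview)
Your argument is correct: applying Lemma~\ref{2.9} to the translate $f-b$ and splitting on whether $|b-a_0|\le |a_l|\,r^l$ or $|b-a_0|>|a_l|\,r^l$ is exactly the standard way to deduce the image statement from the zero-count, and your handling of the boundary case (the tie at index $0$ resolved by Lemma~\ref{2.9} taking the \emph{largest} maximizing index) is the right observation.

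As for comparison with the paper: there is nothing to compare. The paper does not supply its own proof of Corollary~\ref{2.10}; it simply refers the reader to \cite[Proposition~5.16]{Silv07}. Your derivation from Lemma~\ref{2.9} is essentially the argument one finds in that reference, so your write-up is a faithful self-contained substitute for the citation.
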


\begin{proof}[Proof of Corollary \ref{2.10}]
See \cite[Proposition $5.16$]{Silv07} for the proof.
\end{proof}

Let us close this section by proving the existence of one of the most important constant in this paper.
We remark that the constant will appear quite frequently in this paper.

\begin{proposition}\label{2.11}
The following non-zero constant exists:
$$
\min \{ |k|^{1 / (k-1)} \mid k \in \{2, 3, \cdots \} \}.
$$
\end{proposition}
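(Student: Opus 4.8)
The plan is to prove that the quantities $a_k := |k|^{1/(k-1)}$, $k \in \{2,3,\dots\}$, all lie in the interval $(0,1]$ and satisfy $a_k \to 1$ as $k \to \infty$; the existence of a positive minimum then follows at once, since a sequence in $(0,1]$ tending to $1$ can dip below any level $<1$ only finitely often.

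First I would record the easy bounds. As $K$ has characteristic zero, each positive integer $k$ is a non-zero element of $K$, so $|k|>0$ and $a_k$ is a well-defined positive real number; and applying the ultrametric inequality to $k = 1+1+\cdots+1$ gives $|k| \le |1| = 1$, hence $0 < a_k \le 1$ for every $k \ge 2$.

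The main step is to pin down the values $|k|$ for integers $k$ via the residue characteristic of $K$. Since $|n|\le 1$ for every integer $n$, the valuation ring $\mathcal{O} := \{ z\in K : |z|\le 1\}$ contains $\mathbb{Z}$, and for an integer $n$ one has $|n|<1$ precisely when $n$ lies in the maximal ideal of $\mathcal{O}$, i.e. reduces to $0$ in the residue field. Hence the residue characteristic is either $0$ --- in which case $|n|=1$ for all $n\ge 1$, so $a_k=1$ for all $k$ and the minimum is $1$ --- or a prime $p$, in which case $|p|<1$. Assuming the latter, if $m\in\mathbb{Z}$ with $p\nmid m$, pick $a,b\in\mathbb{Z}$ with $am+bp=1$; then $|bp|<1$ forces $|am|=1$ by the ultrametric inequality, whence $|m|=1$. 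Writing $k = p^{v_p(k)}m$ with $p\nmid m$ therefore gives $|k| = |p|^{v_p(k)}$, where $v_p$ denotes the usual $p$-adic valuation. Since $p^{v_p(k)}\le k$ we have $v_p(k)\le \log_p k$, so, using $|p|<1$,
$$
a_k = |p|^{v_p(k)/(k-1)} \ge |p|^{(\log_p k)/(k-1)}.
$$
As $k\to\infty$ the exponent $(\log_p k)/(k-1)$ tends to $0$, so the right-hand side tends to $|p|^0 = 1$; combined with $a_k\le 1$ this proves $a_k\to 1$.

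Finally I would assemble the conclusion. If $a_k = 1$ for every $k$, the minimum equals $1$. Otherwise choose $k_0$ with $a_{k_0}<1$ and set $\varepsilon := 1 - a_{k_0} > 0$; because $a_k\to 1$, the index set $S := \{ k\ge 2 : a_k \le 1-\varepsilon\}$ is finite, and it is non-empty since $k_0\in S$. For $k\notin S$ we have $a_k > 1-\varepsilon \ge a_{k_0} \ge \min_{j\in S} a_j$, so $\min_{k\ge 2} a_k$ exists and equals $\min_{j\in S} a_j$, a minimum of finitely many positive real numbers; in particular it is non-zero. I expect the genuine obstacle to be the middle step, namely identifying $|k|$ for integer $k$ through the residue characteristic --- everything after that is elementary real analysis.
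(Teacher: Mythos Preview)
Your proof is correct. Both you and the paper begin from the same core observation: in characteristic zero the restriction of $|\cdot|$ to $\mathbb{Z}$ is governed by the residue characteristic, so either $|k|=1$ for every $k\ge 1$ (and the minimum is $1$), or there is a prime $p$ with $|p|<1$ and $|k|=|p|^{v_p(k)}$ for all $k$. The difference lies in the endgame. The paper observes that then only prime powers $k=p^n$ contribute nontrivially, reduces the problem to $\min\{|p|^{\,n/(p^n-1)}:n\in\mathbb{N}\}$, and computes this explicitly as $|p|^{1/(p-1)}$ (using that $n/(p^n-1)$ is maximised at $n=1$). You instead bound $v_p(k)\le\log_p k$, deduce $a_k\to 1$, and conclude existence of a positive minimum by a finiteness argument. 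The paper's route is slightly shorter and yields an explicit value; yours is a touch more indirect but avoids having to spot the monotonicity of $n/(p^n-1)$, and you are more careful than the paper in treating the residue-characteristic-zero case explicitly rather than leaving it implicit.
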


\begin{proof}[Proof of Proposition \ref{2.11}]
Assume that the constant
$$
p : = \min \{ l \in \mathbb{N} \mid |l| < 1 \}
$$
exists.
Then, it follows from the multiplicativity of the norm $|\cdot|$ and the minimality of $p$ that $p$ must be a prime number. 
Moreover, if a natural number $m$ is co-prime to $p$, then we have
$$
|m| = 1.
$$
Hence, we have 
\begin{align*}
\min \{ |k|^{1 / (k - 1)} \mid k \in \{2, 3, \cdots \} \} 
&= \min \{ |p|^{n / (p^n - 1)} \mid n \in \mathbb{N} \} \\
&= |p|^{1 / (p-1)}.
\end{align*}
\end{proof}

We remark that the constant is always zero if the characteristic of $K$ is non-zero.


\section{Key Lemmas}

In this section, we will introduce notation such as the set of rational maps preserving a given subset and the sequence of subsets generated by a given subset and a given rational map, which will be used in Section $4$.

Let us begin with the definition of the set of rational maps preserving a subset.

\begin{definition}\label{4.1}
Let $\Omega$ be a subset of $K$, $\lambda$ be a real number which is greater than $1$, and $d$ be a natural number.
Then the {\it set $\mathcal{N}_{\lambda, \Omega}$ of rational maps of degree $d$ preserving $\Omega$} is defined by 
$$
\{ g \in \mathcal{R}_d \mid \forall z \in \Omega, |g'(z)| \geq \lambda, g^{-1}(\{ z \}) \subset \Omega \}
$$
where $\mathcal{R}_d$ is the set of rational maps over $K$ of degree $d$.
\end{definition}

The following lemma is essential for the construction of a topological conjugacy between the Julia sets of two rational maps when those two rational maps are close enough.

\begin{lemma}\label{4.2}
Let $\mathcal{N}_{\lambda, \Omega}$ be the set of rational maps of degree $d$ preserving a subset $\Omega$ and $f$ be an element in $\mathcal{N}_{\lambda, \Omega}$.
Let us assume the conditions as follows:
\begin{enumerate}
\item The rational map $f$ has no critical values and poles in $\Omega$.
\item There exists a $\delta > 0$ such that for any $z$ in $\Omega$, we have
$$
B_{z, \delta} \subset \Omega.
$$
\end{enumerate}
Then, for any $z$ in $\Omega$, any $k$ in $\{1, 2, \cdots, d \}$, and any non-negative real number $r$ which is less than or equal to $\mu$, the restriction map 
$$
f| B_{z_k, r / |f'(z_k)|} \rightarrow B_{z, r}
$$ 
is homeomorphic and we have 
$$
f^{-1}(B_{z, r}) = \bigsqcup_{k=1}^{d} B_{z_k, r / |f'(z_k)|}
$$
where 
$$
f^{-1}(z) = \{ z_1, z_2, \cdots, z_d\}
$$ 
and 
$$
\mu := \min \{ |l|^{1 / (l - 1)} \mid l \in \{ 2, 3, \cdots \} \} \cdot \delta > 0.
$$
\end{lemma}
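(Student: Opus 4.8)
The plan is to reduce the whole statement to a local analysis of $f$ near each of the $d$ preimages of $z$. Write $f^{-1}(z) = \{ z_1, z_2, \dots, z_d \}$ and set $s_k := r / |f'(z_k)|$. First I would record the routine preliminaries. Since $z \in \Omega$ and $f$ has no critical values in $\Omega$, the point $z$ is not a critical value, so it has exactly $d$ distinct preimages, all of which lie in $\Omega$ by the backward invariance built into $\mathcal{N}_{\lambda, \Omega}$; moreover $|f'(z_k)| \ge \lambda > 1$, in particular $f'(z_k) \neq 0$. Since every rational integer has norm at most $1$ we have $\mu \le \delta$, hence $s_k \le r / \lambda < r \le \mu \le \delta$, so $B_{z_k, s_k} \subseteq B_{z_k, \delta} \subseteq \Omega$ and likewise $B_{z, r} \subseteq \Omega$ (the case $r = 0$ is trivial, so assume $r > 0$). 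Finally, because $f$ has no poles in $\Omega$, on $B_{z_k, \delta}$ it is given by a convergent expansion $f(w) = a_0 + a_1 (w - z_k) + a_2 (w - z_k)^2 + \cdots$ with $a_0 = z$ and $a_1 = f'(z_k)$.

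The analytic heart of the proof is to show that the integer $l$ which Corollary \ref{2.10} attaches to $f$ on the ball $B_{z_k, s_k}$ equals $1$; equivalently, that $|a_n| s_k^{\,n} < |a_1| s_k$ for every $n \ge 2$. To control the coefficients $a_n$ I would apply Proposition \ref{2.3}(2) to the derivative $f'$ on $B_{z_k, \delta}$: the map $f'$ has no poles there (its poles lie among those of $f$) and no zeros there (as $|f'| \ge \lambda$ on all of $\Omega$), so $|f'|$ is constant, equal to $|a_1|$, on $B_{z_k, \delta}$; comparing this with Corollary \ref{2.10} applied to $f'$, whose image ball cannot then contain $0$, forces $|n| \cdot |a_n| \cdot \delta^{\,n-1} \le |a_1|$ for all $n \ge 1$ — here the assumption that $K$ has characteristic zero is used, so that $|n| \neq 0$. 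Substituting $s_k = r / |a_1|$ and $|a_1| \ge \lambda > 1$, the inequality $|a_n| s_k^{\,n-1} < |a_1|$ reduces to $\bigl( r / (\delta |a_1|) \bigr)^{n-1} < |n|$, and since $r \le \mu = \min\{ |l|^{1/(l-1)} : l \ge 2 \} \cdot \delta$ this in turn follows from the purely arithmetic estimate $\bigl( \min_{l \ge 2} |l|^{1/(l-1)} \bigr)^{n-1} \le |n|$ for all $n \ge 2$. This estimate is the step I expect to be the main obstacle; it is exactly why the constant of Proposition \ref{2.11} must appear in the definition of $\mu$. I would prove it by writing $n = p^e m$ with $p$ the residue characteristic of $K$ and $m$ coprime to $p$, so that $|n| = |p|^e$, and combining $\min_{l \ge 2} |l|^{1/(l-1)} \le |p|^{1/(p-1)}$ with the elementary bound $n \ge p^e \ge 1 + e(p-1)$.

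Granting the coefficient estimate, Corollary \ref{2.10} immediately gives $f(B_{z_k, s_k}) = B_{z, |a_1| s_k} = B_{z, r}$, so the restriction is surjective. For injectivity, take $w \neq w'$ in $B_{z_k, s_k}$ and factor $w - w'$ out of $f(w) - f(w') = \sum_{j \ge 1} a_j \bigl[ (w - z_k)^j - (w' - z_k)^j \bigr]$; in the resulting sum the term with $j = 1$ has norm $|a_1|$ while, by the estimate just established, every term with $j \ge 2$ has strictly smaller norm, so the ultrametric inequality yields $|f(w) - f(w')| = |a_1| \cdot |w - w'| \neq 0$. Since $f$ is continuous and open by Proposition \ref{2.3}(1), and $B_{z_k, s_k}$ is itself open in $\mathbb{P}^1_K$ (closed balls are open in the ultrametric topology), the bijection $f : B_{z_k, s_k} \to B_{z, r}$ is a homeomorphism.

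It remains to establish $f^{-1}(B_{z, r}) = \bigsqcup_{k=1}^{d} B_{z_k, s_k}$. The inclusion $\supseteq$ is immediate from the surjectivity above. For disjointness, recall that two closed balls of an ultrametric space are either disjoint or nested; if $B_{z_k, s_k} \subseteq B_{z_j, s_j}$ for some $k \neq j$, then $z_k$ and $z_j$ would be two distinct points of $B_{z_j, s_j}$ with the same image $z$, contradicting injectivity of $f$ on $B_{z_j, s_j}$. For the inclusion $\subseteq$, let $w$ be a point with $y := f(w) \in B_{z, r} \subseteq \Omega$; then $y$ is not a critical value of $f$, so $f^{-1}(y)$ consists of exactly $d$ points, and since each $B_{z_k, s_k}$ maps bijectively onto $B_{z, r} \ni y$ it contains exactly one of them; by disjointness these $d$ balls therefore account for all $d$ preimages of $y$, so $w$ lies in one of them. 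This gives the desired decomposition and completes the proof.
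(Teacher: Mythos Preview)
Your proof is correct and follows essentially the same route as the paper's: deduce the coefficient bound $|a_1| > |n|\,|a_n|\,\delta^{\,n-1}$ from the fact that $f'$ is zero- and pole-free on $B_{z_k,\delta}$ (the paper cites Lemma~\ref{2.9} directly rather than going through Proposition~\ref{2.3}(2)), then use the definition of $\mu$ to see that the linear term dominates on $B_{z_k, r/|f'(z_k)|}$, and finish with Corollary~\ref{2.10}; your added details on injectivity, disjointness, and the reverse inclusion $f^{-1}(B_{z,r}) \subseteq \bigsqcup_k B_{z_k,s_k}$ fill in steps the paper leaves implicit. One remark: the ``arithmetic estimate'' you flag as the main obstacle is immediate---by definition $\min_{l\ge 2}|l|^{1/(l-1)} \le |n|^{1/(n-1)}$, and raising both sides to the $(n-1)$-th power gives exactly what you need, so the $p$-adic factorization is unnecessary.
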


Note that the existence of $\mu$ follows from Proposition \ref{2.11}.

\begin{proof}[Proof of Lemma \ref{4.2}]
Since $f$ has no poles in $B_{z_k, \delta}$, there exists a sequence $\{ a_m \}_{m = 0}^{\infty}$ in $K$ such that for any $w$ in $B_{z_k, \delta}$, we have
$$
f(w) = f(z_k) + a_1 \cdot (w - z_k) + a_2 \cdot (w - z_k)^2 + \cdots
$$
and
$$
f'(w) = a_1 + 2 \cdot a_2 \cdot (w - z_k) + 3 \cdot a_3 \cdot (w - z_k)^2 + \cdots.
$$
Moreover, since $f'$ has no zeros in $B_{z_k, \delta}$, we have 
$$
|a_1| > |n| \cdot |a_n| \cdot \delta^{n -1}
$$
for any $n$ in $\{ 2, 3, \cdots \}$ by Lemma \ref{2.9}.
Thus, for any $n$ in $\{ 2, 3, \cdots \}$, we have
\begin{align*}
|a_1| \cdot {\mu \over |f'(z_k)|} 
&> |n| \cdot |a_n | \cdot \delta^{n -1} \cdot {\mu \over |f'(z_k)|} \\
&\geq |a_n| \cdot \mu^{n - 1} \cdot {\mu \over |f'(z_k)|}
= |a_n| \cdot \left( {\mu \over |f'(z_k)|} \right)^n.
\end{align*}
This implies that 
\begin{align*}
|a_1| \cdot {r \over |f'(z_k)|}
&= |a_1| \cdot {\mu \over |f'(z_k)|} \cdot {r \over \mu} \\
&> |a_n| \cdot \left( {\mu \over |f'(z_k)|} \right)^n \cdot {r \over \mu} \\
&\geq |a_n| \cdot \left( {\mu \over |f'(z_k)|} \right)^n \cdot \left( {r \over \mu} \right)^{n}
= |a_n| \cdot \left( {r \over |f'(z_k)|} \right)^n
\end{align*}
for any $n$ in $\{2, 3, \cdots \}$ and any non-negative real number $r$ which is less than or equal to $\mu$.
It follows from Proposition \ref{2.3}, Lemma \ref{2.9}, and Corollary \ref{2.10} that for any non-negative real number $r$ which is less than or equal to $\mu$, the restriction map 
$$
f | B_{z_k, r / |f'(z_k)|} \rightarrow B_{z, r}
$$
is homeomorphic.
On the other hand, it is easy to see that for any $l \neq k$ in $\{1, 2, \cdots, d \}$, we have
$$
B_{z_k, r/ |f'(z_k)|} \cap B_{z_l, r/ |f'(z_l)|} = \emptyset.
$$
\end{proof}

Before constructing the topological conjugacy between the Julia set of two close enough rational maps, we first introduce a decreasing sequence of subsets.

\begin{definition}\label{4.3}
Let $\mathcal{N}_{\lambda, \Omega}$ be the set of rational maps of degree $d$ preserving a subset $\Omega$ and $f$ be an element in $\mathcal{N}_{\lambda, \Omega}$.
Then the {\it sequence $\{ \Omega_{k, f} \}_{k = 0}^{\infty}$ of subsets generated by $\Omega$ and $f$} is defined as follows:
\begin{enumerate}
\item The {\it $k$-th term} $\Omega_{k, f}$ is defined by 
$$
f^{-k}(\Omega).
$$  
\item The {\it limit term} $\Omega_{\infty, f}$ is defined by 
$$
\bigcap_{k = 0}^{\infty} \Omega_{k, f}.
$$
\end{enumerate}
\end{definition}

The following lemma shows basic properties of the sequence.

\begin{lemma}\label{4.4}
Let $\mathcal{N}_{\lambda, \Omega}$ be the set of rational maps of degree $d$ preserving a subset $\Omega$, $f$ be an element in $\mathcal{N}_{\lambda, \Omega}$, and $\{ \Omega_{k, f} \}_{k = 0}^{\infty}$ be the sequence of subsets generated by $\Omega$ and $f$.
Then we have
$$
\Omega_{\infty, f} \subset \cdots \subset \Omega_{k + 1, f} \subset \Omega_{k, f} \subset \cdots \subset \Omega_{1, f} \subset \Omega_{0, f}.
$$
Moreover, the limit term $\Omega_{\infty, f}$ is completely invariant under $f$.
\end{lemma}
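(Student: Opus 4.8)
The plan is to derive everything from two inputs: the defining membership condition for $\mathcal{N}_{\lambda,\Omega}$ — namely that $f^{-1}(\{z\})\subset\Omega$ for every $z\in\Omega$ — and the surjectivity of a non-constant rational map on $\mathbb{P}^1_K$ (the openness and continuity of Proposition \ref{2.3} are not needed, and neither is the derivative condition $|f'(z)|\ge\lambda$, which plays no role in this lemma). The rest is bookkeeping with preimages and intersections.

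First I would establish the monotone chain. The base inclusion $\Omega_{1,f}\subset\Omega_{0,f}$ is immediate: since $f\in\mathcal{N}_{\lambda,\Omega}$, we have $f^{-1}(\{z\})\subset\Omega$ for each $z\in\Omega$, hence $\Omega_{1,f}=f^{-1}(\Omega)=\bigcup_{z\in\Omega}f^{-1}(\{z\})\subset\Omega=\Omega_{0,f}$. Next, from $f^{k+1}=f^{k}\circ f$ one gets $\Omega_{k+1,f}=(f^{k+1})^{-1}(\Omega)=f^{-1}((f^{k})^{-1}(\Omega))=f^{-1}(\Omega_{k,f})$ for every $k$. Since taking preimages preserves inclusions, applying $f^{-1}$ repeatedly to the base inclusion gives $\Omega_{k+1,f}=f^{-1}(\Omega_{k,f})\subset f^{-1}(\Omega_{k-1,f})=\Omega_{k,f}$ for all $k\ge 1$, which is the whole chain; and $\Omega_{\infty,f}=\bigcap_{k\ge 0}\Omega_{k,f}\subset\Omega_{k,f}$ for each $k$ by definition of the intersection.

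For complete invariance I would do the backward direction first, since the forward direction uses it. Because preimage commutes with arbitrary intersections,
$$
f^{-1}(\Omega_{\infty,f})=f^{-1}\Big(\bigcap_{k\ge 0}\Omega_{k,f}\Big)=\bigcap_{k\ge 0}f^{-1}(\Omega_{k,f})=\bigcap_{k\ge 0}\Omega_{k+1,f}=\bigcap_{k\ge 1}\Omega_{k,f},
$$
and by the monotone chain just established, dropping the largest term $\Omega_{0,f}$ does not change the intersection, so $\bigcap_{k\ge 1}\Omega_{k,f}=\bigcap_{k\ge 0}\Omega_{k,f}=\Omega_{\infty,f}$; hence $f^{-1}(\Omega_{\infty,f})=\Omega_{\infty,f}$. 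For the forward direction: if $z\in\Omega_{\infty,f}$ then $z\in\Omega_{k+1,f}$ for all $k$, i.e. $f^{k+1}(z)\in\Omega$, i.e. $f^{k}(f(z))\in\Omega$, so $f(z)\in f^{-k}(\Omega)=\Omega_{k,f}$ for every $k$, giving $f(z)\in\Omega_{\infty,f}$; thus $f(\Omega_{\infty,f})\subset\Omega_{\infty,f}$. Conversely, let $w\in\Omega_{\infty,f}$; since $f$ is non-constant it is surjective on $\mathbb{P}^1_K$, so $f^{-1}(\{w\})\ne\emptyset$, and $f^{-1}(\{w\})\subset f^{-1}(\Omega_{\infty,f})=\Omega_{\infty,f}$ by the backward invariance, so any $z\in f^{-1}(\{w\})$ lies in $\Omega_{\infty,f}$ with $f(z)=w$. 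Hence $f(\Omega_{\infty,f})=\Omega_{\infty,f}$.

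I do not expect a genuine obstacle here: the statement is essentially formal. The one place requiring a moment's care is the reverse forward inclusion $\Omega_{\infty,f}\subset f(\Omega_{\infty,f})$, where one must combine surjectivity of $f$ with the already-proved identity $f^{-1}(\Omega_{\infty,f})=\Omega_{\infty,f}$ rather than attempting to exhibit a preimage inside $\Omega_{\infty,f}$ directly; ordering the two invariance claims correctly makes this clean.
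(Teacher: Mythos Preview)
Your proof is correct and follows the same approach as the paper, which simply declares the lemma ``trivial from Definition \ref{4.3}'' without further detail. You have carefully supplied the routine verifications the paper omits, including the one nontrivial point---using surjectivity of $f$ together with $f^{-1}(\Omega_{\infty,f})=\Omega_{\infty,f}$ to get $\Omega_{\infty,f}\subset f(\Omega_{\infty,f})$---and everything checks out.
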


\begin{proof}[Proof of Lemma \ref{4.4}]
It is trivial from Definition \ref{4.3}.
\end{proof}

Now we prepare a lemma for the construction of a topological conjugacy.

\begin{lemma}\label{4.5}
Let $\mathcal{N}_{\lambda, \Omega}$ be the set of rational maps of degree $d$ preserving a subset $\Omega$ and $f$ and $g$ be elements in $\mathcal{N}_{\lambda, \Omega}$.
Let us assume the conditions as follows:
\begin{enumerate}
\item The rational map $f$ has no critical values and poles in $\Omega$.
\item There exists a $\delta > 0$ such that for any $z$ in $\Omega$, we have
$$
B_{z, \delta} \subset \Omega.
$$
\item For any $z$ in $\Omega$, we have
$$
|f(z) - g(z)| \leq \mu
$$
where
$$
\mu := \min \{ |k|^{1 / (k - 1)} \mid k \in \{ 2, 3, \cdots \} \} \cdot \delta > 0.
$$
\end{enumerate}
Then, there exists a sequence $\{ h_l : \Omega_{l, f} \rightarrow \Omega_{l, g} \}_{l = 0}^{\infty}$ of homeomorphisms such that for any $l$ in $\{0, 1, \cdots \}$, any $z$ in $\Omega_{l + 1, f}$, and any $w$ in $\Omega_{l + 1, g}$, we have
$$
h_{l} \circ f (z) = g \circ h_{l+1} (z),
$$
$$
|h_{l + 1}(z) - h_l(z)| \leq {\mu \over \lambda^l},
$$
and
$$
|h^{-1}_{l + 1}(w) - h^{-1}_l(w)| \leq {\mu \over \lambda^l}
$$
where $\{ \Omega_{l, f} \}_{l = 0}^{\infty}$ is the sequence of subsets generated by $\Omega$ and $f$ and $\{ \Omega_{l, g} \}_{l = 0}^{\infty}$ is the sequence of subsets generated by $\Omega$ and $g$. 
\end{lemma}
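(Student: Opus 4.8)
The plan is to construct the $h_l$ by induction on $l$, letting each $h_{l+1}$ be a lift of $h_l\circ f$ through $g$ along inverse branches. Throughout I would carry the inductive hypotheses that $h_j\colon\Omega_{j,f}\to\Omega_{j,g}$ is a homeomorphism, that $h_{j-1}\circ f=g\circ h_j$ on $\Omega_{j,f}$, and that the two norm estimates hold for the indices already constructed; the base case is $h_0:=\mathrm{id}_\Omega$. A preliminary remark, used repeatedly, is that $g$ automatically inherits property (1): since $|g'|\ge\lambda>0$ on $\Omega$ the map $g$ has no critical point in $\Omega$, and a critical point $c$ with $g(c)\in\Omega$ would lie in $g^{-1}(\{g(c)\})\subset\Omega$, which is absurd; moreover a pole of $g$ inside a ball $B_{z,\delta}\subset\Omega$ would make $g$ unbounded there while $f$ is bounded (it has no pole), contradicting hypothesis (3). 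Hence Lemma \ref{4.2} applies to $g$ with the same $\delta$ and $\mu$.

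For the inductive step I would fix $z\in\Omega_{l+1,f}$, set $y:=f(z)\in\Omega_{l,f}\subset\Omega$, and apply Lemma \ref{4.2} to $f$ at $y$ with radius $\mu$ to obtain $f^{-1}(\{y\})=\{z_1,\dots,z_d\}$ (with $z=z_k$ for some $k$) and the partition $f^{-1}(B_{y,\mu})=\bigsqcup_j B_{z_j,\,\mu/|f'(z_j)|}$ into $d$ disjoint balls, each carried homeomorphically onto $B_{y,\mu}$. Telescoping the inductive estimates and using the ultrametric inequality gives $|h_l(y)-y|\le\mu$, so $B_{h_l(y),\mu}=B_{y,\mu}$, and Lemma \ref{4.2} applied to $g$ at $h_l(y)$ gives $g^{-1}(B_{y,\mu})=\bigsqcup_{j'}B_{w_{j'},\,\mu/|g'(w_{j'})|}$ with $g^{-1}(\{h_l(y)\})=\{w_1,\dots,w_d\}$. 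Now hypothesis (3) gives $|g(z_j)-y|=|g(z_j)-f(z_j)|\le\mu$, so each $z_j$ sits in exactly one $g$-ball; and since $g-f$ is analytic and bounded by $\mu$ on $B_{z_j,\delta}$, the non-Archimedean Cauchy estimate bounds its derivative there by $\mu/\delta\le 1<\lambda\le|f'(z_j)|$, so $|g'|\equiv|f'(z_j)|$ on $B_{z_j,\delta}$ by Proposition \ref{2.3}, which forces the $g$-ball through $z_j$ to coincide with $B_{z_j,\,\mu/|f'(z_j)|}$. Thus the two partitions of $f^{-1}(B_{y,\mu})$ and $g^{-1}(B_{y,\mu})$ are the same family of $d$ balls, and I would define $h_{l+1}(z)$ to be the unique point of $B_{z,\,\mu/|f'(z)|}$ with $g(h_{l+1}(z))=h_l(y)$, which is legitimate because $g$ restricts to a homeomorphism of that ball onto $B_{y,\mu}\ni h_l(y)$.

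The verification of the required properties then splits into: the conjugacy relation $g\circ h_{l+1}(z)=h_l(y)=h_l\circ f(z)$ and the membership $h_{l+1}(z)\in g^{-1}(\Omega_{l,g})=\Omega_{l+1,g}$, both immediate; injectivity, since equal values force equal $f$-images ($h_l$ being injective), hence a common branch ball, hence equal points; surjectivity onto $\Omega_{l+1,g}$, by taking $w$ there, setting $v:=g(w)$ and $y:=h_l^{-1}(v)$, using the $h^{-1}$-estimates to get $|v-y|\le\mu$ so that $w$ lies in one branch ball over $B_{y,\mu}$, and matching $w$ with $h_{l+1}$ of the corresponding $z_j$ by uniqueness of the lift; and continuity of $h_{l+1}$ and its inverse, which is clear since on each clopen branch ball $h_{l+1}$ equals $(g|_{\mathrm{ball}})^{-1}\circ h_l\circ f$. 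For the decay estimates, the key point is that $h_{l+1}(z)$ and $h_l(z)$ both lie in $B_{z,\,\mu/|f'(z)|}$, on which $g$ multiplies absolute differences exactly by $|g'(z)|=|f'(z)|$ because the linear term of the local expansion dominates there — this is precisely the computation in the proof of Lemma \ref{4.2} applied to $g$; combined with $g(h_{l+1}(z))-g(h_l(z))=h_l(y)-h_{l-1}(y)$ and the inductive bound $|h_l(y)-h_{l-1}(y)|\le\mu/\lambda^{\,l-1}$, dividing by $|f'(z)|\ge\lambda$ yields $|h_{l+1}(z)-h_l(z)|\le\mu/\lambda^{\,l}$, and the companion bound for $h_{l+1}^{-1}$ follows symmetrically using $f$ on the branch ball and the inductive $h^{-1}$-estimate.

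The part I expect to be the main obstacle is the branch-matching together with the sharp estimate: one has to prove that the inverse-branch balls of $f$ and of $g$ over $B_{y,\mu}$ literally coincide — which is what makes the lift globally well defined and bijective — and then improve the naive bound $|h_{l+1}(z)-h_l(z)|\le\mu/|f'(z)|\le\mu/\lambda$ to the geometric-decay bound $\mu/\lambda^{\,l}$ by exploiting that $g$ acts as an exact $|f'(z)|$-dilation on the relevant small balls. The rest is bookkeeping with the ultrametric inequality and with the backward-invariance built into $\mathcal{N}_{\lambda,\Omega}$.
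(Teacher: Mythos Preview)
Your approach is correct and follows the same core idea as the paper: define $h_0=\mathrm{id}_\Omega$ and inductively set $h_{l+1}(z)$ to be the unique $g$-preimage of $h_l(f(z))$ in the branch ball $B_{z,\mu/|g'(z)|}$, invoking Lemma \ref{4.2}. You are in fact more careful than the paper on two points it leaves implicit: you check that $g$ inherits condition (1) so that Lemma \ref{4.2} applies to $g$, and you spell out the geometric-decay estimate via the exact dilation $|g(w_1)-g(w_2)|=|f'(z)|\,|w_1-w_2|$ on the small ball, where the paper simply writes ``Repeating this process.''

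The one structural difference is in how bijectivity is obtained. The paper does not prove injectivity and surjectivity of $h_l$ directly; instead it builds a second sequence $\tilde h_l:\Omega_{l,g}\to\Omega_{l,f}$ by the same recipe with the roles of $f$ and $g$ swapped, and then asserts $h_l\circ\tilde h_l=\mathrm{id}$ and $\tilde h_l\circ h_l=\mathrm{id}$. Your branch-matching argument---showing that the $f$- and $g$-inverse-branch decompositions of $B_{y,\mu}$ are literally the same family of balls---achieves the same end more transparently and avoids the need to verify separately that the two constructions invert each other. Either route works; yours makes the mechanism clearer, while the paper's is shorter to state once the symmetry is granted.
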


\begin{proof}[Proof of Lemma \ref{4.5}]
Setting
\begin{align*}
h_0 : \Omega_{0, f} &\rightarrow \Omega_{0, g} \\
z &\mapsto z,
\end{align*}
we have a homeomorphism between $\Omega_{0, f}$ and $\Omega_{0, g}$ since $\Omega_{0, f}$ is equal to $\Omega_{0, g}$. 
Next we construct a continuous and open map $h_1 : \Omega_{1, f} \rightarrow \Omega_{1, g}$ as follows: For any $z$ in $\Omega_{1, f}$, there exists a unique $w$ in $K$ such that
$$
w \in  g^{-1}(\{ h_0 \circ f(z) \})
$$
and
$$
w \in B_{z, \mu / |g'(z)|}.
$$
Thus we can set
\begin{align*}
h_1 : \Omega_{1, f} &\rightarrow \Omega_{1, g} \\
z &\mapsto w.
\end{align*}
Then for any $z$ in $\Omega_{1, f}$, we have
$$
h_0 \circ f (z) = g \circ h_1 (z)
$$
and
$$
| h_{1} (z) - h_0(z) | \leq { \mu \over \lambda}
$$
and the map $h_1 : \Omega_{1, f} \rightarrow \Omega_{1, g}$ is continuous and open by Lemma \ref{4.2}.
Repeating this process, we can construct a sequence $\{ h_l : \Omega_{l, f} \rightarrow \Omega_{l, g} \}_{l = 0}^{\infty}$ of continuous and open maps such that for any $l$ in $\{ 0, 1, \cdots \}$ and any $z$ in $\Omega_{l + 1, f}$, we have
$$
h_l  \circ f (z) = g \circ h_{l + 1} (z)
$$
and
$$
| h_{l + 1} (z) - h_{l}(z) | \leq { \mu \over \lambda^{l}}.
$$
Similarly, we can also construct a sequence $\{ \tilde{h}_l : \Omega_{l, g} \rightarrow \Omega_{l, f} \}_{l = 0}^{\infty}$ of continuous and open maps such that for any $l$ in $\{ 0, 1, \cdots \}$ and any $w$ in $\Omega_{l + 1, g}$, we have
$$
f \circ \tilde{h}_{l + 1} (w) = \tilde{h}_{l} \circ g (w)
$$
and
$$
| \tilde{h}_{l + 1} (w) - \tilde{h}_{l}(w) | \leq { \mu \over \lambda^{l}}.
$$

Moreover, it is clear that for any $l$ in $\{ 0 , 1, \cdots \}$, any $z$ in $\Omega_{l, f}$, and any $w$ in $\Omega_{l, g}$, we have
$$
h_l \circ \tilde{h}_l (w) = w
$$
and
$$
\tilde{h}_l \circ h_l (z) = z.
$$
This implies that the map $h_l : \Omega_{l, f} \rightarrow \Omega_{l, g}$ is homeomorphic and $\tilde{h}_l : \Omega_{l, g} \rightarrow \Omega_{l, f}$ is the inverse map for any $l$ in $\{0, 1, \cdots \}$.
\end{proof}

\begin{lemma}\label{4.6}
Let $\mathcal{N}_{\lambda, \Omega}$ be the set of rational maps of degree $d$ preserving a closed subset $\Omega$ and $f$ and $g$ be elements in $\mathcal{N}_{\lambda, \Omega}$.
Suppose that there exists a sequence $\{ h_k : \Omega_{k, f} \rightarrow \Omega_{k, g} \}_{k = 0}^{\infty}$ of homeomorphisms such that for any $k$ in $\mathbb{N}$ and any $z$ in $\Omega_{k + 1, f}$, we have
$$
h_{k} \circ f (z) = g \circ h_{k + 1} (z),
$$
$$
|h_{k + 1}(z) - h_k(z)| \leq {\mu \over \lambda^k},
$$
and
$$
|h^{-1}_{k + 1}(z) - h^{-1}_k(z)| \leq {\mu \over \lambda^k}
$$
for some $\mu > 0$.
Then, there exists a homeomorphism $h_{\infty} : \Omega_{\infty, f} \rightarrow \Omega_{\infty, g}$ such that for any $z$ in $\Omega_{\infty, f}$, we have
$$
h_\infty \circ f (z) = g \circ h_\infty (z).
$$
\end{lemma}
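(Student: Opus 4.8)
The plan is to obtain $h_\infty$ as the uniform limit of the homeomorphisms $h_k$ on $\Omega_{\infty,f}$ and to obtain its inverse as the uniform limit of the $h_k^{-1}$ on $\Omega_{\infty,g}$, and then to prove that these two limits are mutually inverse by exploiting that $f$ (resp. $g$) expands distances by the factor $\lambda$ at the $\mu$-scale on $\Omega$. First, fix $z \in \Omega_{\infty, f}$; since $\Omega_{\infty,f} \subset \Omega_{k,f}$ for every $k$ by Lemma \ref{4.4}, every $h_k(z)$ is defined. From $|h_{k+1}(z) - h_k(z)| \le \mu/\lambda^k$ and the ultrametric inequality, $|h_m(z) - h_k(z)| \le \max_{k \le j < m} \mu/\lambda^j = \mu/\lambda^k$ for $m > k$ (using $\lambda > 1$), so $\{h_k(z)\}_k$ is Cauchy and, $K$ being complete, converges; put $h_\infty(z) := \lim_k h_k(z)$. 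Letting $m \to \infty$ gives $|h_\infty(z) - h_k(z)| \le \mu/\lambda^k$, so the convergence is uniform on $\Omega_{\infty,f}$, whence $h_\infty$ is continuous (a uniform limit of continuous maps), and taking $k = 0$ (where $h_0 = \mathrm{id}$) gives $|h_\infty(z) - z| \le \mu$. Moreover $h_m(z) \in \Omega_{m,g} \subset \Omega_{k,g}$ for $m \ge k$, and $\Omega_{k,g} = g^{-k}(\Omega)$ is closed because $\Omega$ is closed and $g$ is continuous (Proposition \ref{2.3}), so $h_\infty(z) \in \Omega_{k,g}$ for all $k$, i.e.\ $h_\infty(z) \in \Omega_{\infty,g}$. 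Running the same argument with the hypothesis on $|h_{k+1}^{-1} - h_k^{-1}|$ produces a continuous map $\tilde h_\infty : \Omega_{\infty,g} \to \Omega_{\infty,f}$ with $|\tilde h_\infty(w) - w| \le \mu$.

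Passing to the limit in the conjugacy relations is then routine: for $z \in \Omega_{\infty,f}$ one has $f(z) \in \Omega_{\infty,f}$ by complete invariance (Lemma \ref{4.4}), so $h_k(f(z)) \to h_\infty(f(z))$ and $g(h_{k+1}(z)) \to g(h_\infty(z))$ by continuity of $g$; since $h_k \circ f = g \circ h_{k+1}$ for every $k$, the limit gives $h_\infty \circ f = g \circ h_\infty$ on $\Omega_{\infty,f}$, and symmetrically $f \circ \tilde h_\infty = \tilde h_\infty \circ g$ on $\Omega_{\infty,g}$. It remains to check that $h_\infty$ and $\tilde h_\infty$ are mutually inverse. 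Set $\Phi := \tilde h_\infty \circ h_\infty : \Omega_{\infty,f} \to \Omega_{\infty,f}$; the two conjugacy relations give $\Phi \circ f = f \circ \Phi$, and $|\Phi(z) - z| \le \mu$ for all $z$, hence $|f^n(\Phi(z)) - f^n(z)| = |\Phi(f^n(z)) - f^n(z)| \le \mu$ for every $n$. On the other hand, $|f'| \ge \lambda$ on $\Omega$ together with the non-Archimedean ball structure of Lemma \ref{4.2} gives $|f(u) - f(v)| \ge \lambda\,|u - v|$ whenever $u, v \in \Omega_{\infty,f}$ with $|u - v| \le \mu$; applying this repeatedly along the orbit of the pair $(\Phi(z), z)$ (each iterate stays within $\mu$ by the previous bound) yields $|f^n(\Phi(z)) - f^n(z)| \ge \lambda^n\,|\Phi(z) - z|$, which is compatible with the bound $\le \mu$ only if $\Phi(z) = z$. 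Thus $\tilde h_\infty \circ h_\infty = \mathrm{id}$, and the mirror argument with $g$ in place of $f$ gives $h_\infty \circ \tilde h_\infty = \mathrm{id}$, so $h_\infty : \Omega_{\infty,f} \to \Omega_{\infty,g}$ is a homeomorphism conjugating $f$ and $g$.

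The step I expect to be the real obstacle is the expansion estimate for $f$ at the $\mu$-scale — equivalently, securing a modulus of continuity for the family $\{h_k^{-1}\}$ uniform in $k$ — since without it the limit $\tilde h_\infty$ need not invert $h_\infty$. This is precisely where one must use, beyond the bare hypotheses of the statement, that $\mu$ does not exceed the radius on which Lemma \ref{4.2} provides the disjoint ball decomposition of preimages and that $|f'|$ is locally constant and bounded below by $\lambda$ on $\Omega$; the verification is essentially the computation already carried out in the proof of Lemma \ref{4.2}.
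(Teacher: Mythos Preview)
Your overall strategy coincides with the paper's: restrict the $h_k$ to $\Omega_{\infty,f}$, use the geometric estimates to see that the sequence is uniformly Cauchy, take the uniform limit $h_\infty$, and pass to the limit in the relation $h_k\circ f=g\circ h_{k+1}$ using continuity of $g$. The paper's write-up is extremely terse at the one nontrivial point---why the limit $h_\infty$ is a \emph{homeomorphism} rather than merely continuous---and simply asserts it, implicitly relying on the parallel uniform convergence of the $h_k^{-1}$ to furnish a continuous inverse.

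Where you genuinely diverge from the paper is in how you justify invertibility of $h_\infty$. Instead of arguing that the uniform limits of $h_k$ and $h_k^{-1}$ are mutually inverse directly, you introduce the commuting self-map $\Phi=\tilde h_\infty\circ h_\infty$ and use an expansion/shadowing argument ($|f(u)-f(v)|\geq\lambda|u-v|$ on $\Omega$ at scale $\mu$) to force $\Phi=\mathrm{id}$. This is correct and conceptually clean, but, as you yourself note, it requires importing the hypotheses of Lemma~\ref{4.2} (no critical values or poles of $f$ in $\Omega$, $\Omega$ is $\delta$-thickened, and $\mu$ is the specific constant of that lemma), none of which appear in the statement of Lemma~\ref{4.6}. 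The paper avoids this by treating the bijectivity of $h_\infty$ as a formal consequence of the two uniformly Cauchy sequences $\{h_k\}$ and $\{h_k^{-1}\}$; your route trades that (somewhat underjustified) formal step for a dynamical one that leans on ambient structure. Both reach the same conclusion; yours is more explicit but less self-contained relative to the hypotheses actually stated.
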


\begin{proof}[Proof of Lemma \ref{4.6}]
Since $\Omega_{\infty, f}$ is complete, a topological space 
$$
\{ \phi : \Omega_{\infty, f} \rightarrow \Omega_{\infty, g} \mid \phi : \text{ continuous with respect to $\rho$} \}
$$
is also complete with respect to a metric
$$
\sup \{\rho(\phi_1 (z), \phi_2 (z)) \mid z \in \Omega_{\infty, f} \}.
$$ 
Moreover, it is clear that the sequence $\{ h_k : \Omega_{k, f} \rightarrow \Omega_{k, g} \}_{k = 0}^{\infty}$ of homeomorphisms is Cauchy so there exists a homeomorphism $h_{\infty} : \Omega_{\infty, f} \rightarrow \Omega_{\infty, g}$ such that 
$$
\lim_{k \rightarrow \infty} \sup \{ \rho( h_{\infty}(z), h_k(z)) \mid z \in \Omega_{\infty, f} \} = 0.
$$
Furthermore, since any rational map over $K$ is continuous, we have
\begin{align*}
h_{\infty} \circ f(z) 
&= \lim_{m \rightarrow \infty}h_{m} \circ f(z) \\
&= \lim_{m \rightarrow \infty}g \circ h_{m + 1}(z) \\
&= g \circ \lim_{m \rightarrow \infty} h_{m + 1}(z)
= g \circ h_{\infty} (z).
\end{align*}
for any $z$ in $\Omega_{\infty, f}$.
\end{proof}

Next let us consider a few basic properties of the topology of the set of rational maps.

\begin{lemma}\label {4.7}
Let $\Omega$ be a closed subset in $K$ and $f$ be an element in the set $\mathcal{R}_d$ of rational maps of degree $d$.
Let us assume the conditions as follows:
\begin{enumerate}
\item The rational map $f$ has no critical values and poles in $\Omega$.
\item There exists a $\delta > 0$ such that for any $z$ in $\Omega$, we have
$$
B_{z, \delta} \subset \Omega.
$$
\item There exists an $\eta > 0$ such that
$$
\Omega \subset B_{0, \eta}.
$$
\end{enumerate}
Then for any $r > 0$ and $s > 0$, there exists a neighborhood $U$ of $f$ in $\mathcal{R}_d$ such that for any rational map $g$ in $U$ and any $z$ in $\Omega$, we have
$$
|f(z) - g(z) | < r
$$
and
$$
|f'(z) - g'(z)| < s.
$$
In particular, any rational map $g$ in $U$ has no poles in $\Omega$.
\end{lemma}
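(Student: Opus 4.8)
\emph{Proof proposal.} The plan is to reduce the statement to elementary estimates on the coefficients of a fixed representative of $f$, together with one compactness substitute coming from condition $(2)$. First I would choose homogeneous polynomials $f_1,f_2$ of degree $d$ representing $f$ with the maximum of the absolute values of their coefficients equal to $1$ (scaling by the inverse of a coefficient realizing that maximum), and put $P(z):=f_1(z,1)=\sum_{i=0}^{d}a_iz^{i}$, $Q(z):=f_2(z,1)=\sum_{i=0}^{d}b_iz^{i}$, so that $f=P/Q$ on $K$ and $|a_i|,|b_i|\le 1$ for all $i$. Setting $M:=\max\{1,\eta\}$, condition $(3)$ gives $|z|\le\eta$, hence $|z^{i}|\le M^{d}$, for every $z\in\Omega$ and $0\le i\le d$; since $|i|\le 1$ over any non-Archimedean field, this yields $|P(z)|,|Q(z)|,|P'(z)|,|Q'(z)|\le M^{d}$ on $\Omega$. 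Next, by the identification of $\mathcal{R}_d$ with an open subset of a projective space over $K$ recalled in Section $2$, for each $\varepsilon>0$ the set $U_\varepsilon$ of those $g\in\mathcal{R}_d$ admitting a representative $g_1,g_2$ whose coefficients lie within $\varepsilon$ of those of $f_1,f_2$ is a neighborhood of $f$; writing $\tilde P(z):=g_1(z,1)$, $\tilde Q(z):=g_2(z,1)$, one gets $|P-\tilde P|,|Q-\tilde Q|,|P'-\tilde P'|,|Q'-\tilde Q'|<\varepsilon M^{d}$ uniformly on $\Omega$.

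The only real obstacle is a \emph{uniform} positive lower bound for $|Q|$ on $\Omega$, which is not automatic since $\Omega$ need not be compact. Let $w_1,\dots,w_m$ (with $m\le d$) be the poles of $f$ in $K$ and $b$ the leading coefficient of $Q$, so $Q(z)=b\prod_{j=1}^{m}(z-w_j)$. No $w_j$ lies in $\Omega$; and if some $w_j$ were within distance $\delta$ of a point $z\in\Omega$, condition $(2)$ would give $w_j\in B_{z,\delta}\subset\Omega$, a contradiction. Hence $|z-w_j|>\delta$ for all $z\in\Omega$ and all $j$, so with $c:=|b|\,\min\{1,\delta^{d}\}>0$ we have
$$
|Q(z)|\ \ge\ c\qquad(z\in\Omega).
$$
Shrinking $\varepsilon$ so that $\varepsilon M^{d}\le c$, the ultrametric inequality forces $|\tilde Q(z)|=|Q(z)|\ge c$ for all $z\in\Omega$; in particular $\tilde Q$ has no zero in $\Omega$, so $g$ has no poles in $\Omega$ (the last assertion), and one also gets $|\tilde P(z)|,|\tilde Q(z)|\le M^{d}$ there.

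It remains to bound the two differences on $\Omega$, which is now routine ultrametric bookkeeping. From
$$
f(z)-g(z)=\frac{P(z)\bigl(\tilde Q(z)-Q(z)\bigr)+Q(z)\bigl(P(z)-\tilde P(z)\bigr)}{Q(z)\,\tilde Q(z)}
$$
the bounds above give $|f(z)-g(z)|\le \varepsilon M^{2d}/c^{2}$. For the derivative I would use $f'=(P'Q-PQ')/Q^{2}$ and $g'=(\tilde P'\tilde Q-\tilde P\tilde Q')/\tilde Q^{2}$: expanding $P'Q-\tilde P'\tilde Q=P'(Q-\tilde Q)+\tilde Q(P'-\tilde P')$ (and symmetrically $PQ'-\tilde P\tilde Q'$) shows the two numerators differ by at most $\varepsilon M^{2d}$, while $|Q^{2}-\tilde Q^{2}|=|Q-\tilde Q|\,|Q+\tilde Q|\le\varepsilon M^{2d}$, and the same manipulation as above yields $|f'(z)-g'(z)|\le \varepsilon M^{4d}/c^{4}$ on $\Omega$. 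Finally, given $r,s>0$, I would pick $\varepsilon>0$ with $\varepsilon M^{d}\le c$, $\varepsilon M^{2d}/c^{2}<r$, and $\varepsilon M^{4d}/c^{4}<s$, and take $U:=U_\varepsilon$. Apart from the lower bound on $|Q|$, everything is just the ultrametric inequality and the (polynomial, hence continuous) dependence of the coefficients of $g$, of $g'$, and of the numerators and denominators appearing above, on the point $g\in\mathcal{R}_d$.
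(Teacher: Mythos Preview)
Your proposal is correct and follows essentially the same route as the paper: write $f=P/Q$, bound $|P|,|Q|,|P'|,|Q'|$ uniformly on the bounded set $\Omega$, perturb the coefficients of the representative, and do ultrametric bookkeeping on $f-g$ and $f'-g'$. The only notable difference is that the paper simply asserts that $m_2:=\min\{|f_2(z)|:z\in\Omega\}$ exists ``since $f_2$ has no zero in $\Omega$'', whereas you actually justify the uniform positive lower bound on $|Q|$ by factoring $Q(z)=b\prod_j(z-w_j)$ and using condition~(2) to force $|z-w_j|>\delta$ on $\Omega$; this is a genuine improvement, since $\Omega$ need not be compact over a field like $\mathbb{C}_p$ and the bare assertion in the paper requires exactly this kind of argument.
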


\begin{proof}[Proof of Lemma \ref{4.7}]
The rational map $f$ can be written as 
$$
f(z) = {f_1(z) \over f_2(z)}
$$
where $f_1$ and $f_2$ are polynomial maps over $K$ with no common factors.
We shall denote the maximal value of a polynomial map $f_l$ and $f'_l$ in $B_{0, \eta}$ by $M_l$ and $M'_l$ for each $l$ in $\{ 1, 2 \}$, respectively. That is, for each $l$ in $\{ 1, 2 \}$, we set
$$
M_l := \max \{ |f_l(z)| \mid z \in B_{0, \eta} \}
$$
and
$$
M'_l := \max \{ |f'_l(z)| \mid z \in B_{0, \eta} \} .
$$
It is easy to check that these constants $M_l$ and $M'_l$ exists for any $l$ in $\{ 1, 2 \}$ even if $B_{0, \eta}$ is non-compact, see \cite[Theorem $5.13$]{Silv07} for more details.
Similarly, we shall also denote the minimal value of a polynomial map $f_2(z)$ in $\Omega$ by $m_2$, that is,
$$
m_2 := \min \{ |f_2(z)| \mid z \in \Omega \}.
$$
It is also easy to check that this constant $m_2$ is also exists since $f_2$ has no zero in $\Omega$.

For each $i$ and $j$ in $\{0, 1, \cdots, d \}$, we first set the constants
$$
\xi_i := {r \over M_{2} \cdot \eta^{i} } > 0
$$
and
$$
\zeta_j := \min \{ { m_{2} \over \eta^j}, {r \over M_{2}^2 \cdot \eta^j } \cdot M_{1} \} > 0
$$
Then for any rational map $g = g_1 / g_2$ in $\mathcal{R}_d$ with 
$$
g_1(z) = f_1(z) + \sum_{i = 0}^{d} \epsilon_i \cdot z^i
$$
and
$$
g_2(z) = f_2(z) + \sum_{j = 0}^{d} \kappa_j \cdot z^j
$$ 
where 
$$
|\epsilon_i| < \xi_i
$$
and
$$
|\kappa_j| < \zeta_j
$$
for each $i$ and $j$ in $\{0, 1, \cdots, d\}$, it is easy to check that 
\begin{align*}
|f(z) - g(z)| 
= { |f_1(z) \cdot (\sum_{j = 0}^{d} \kappa_j \cdot z^j) - f_2(z) \cdot (\sum_{i = 0}^{d} \epsilon_i \cdot z^i)| \over |f_2(z)| \cdot |g_2(z)|} < r
\end{align*}
for any $z$ in $\Omega$.

Next, for each $i$ and $j$ in $\{0, 1, \cdots, d \}$, we set the constants
$$
\xi'_i := \min \{ {s \over M'_{2} \cdot \eta^{i} }, {s \over M_{1} \cdot \eta^{i - 1}} \} > 0
$$
and
$$
\zeta'_j := \min \{ { m_{2} \over \eta^j}, {s \over M'_{1}, \eta^j }, {s \over M_{1} \cdot \eta^{j - 1} }, {s \over M_{2} \cdot \eta^j } \cdot \max\{M'_{1} \cdot M_{2}, M_{1} \cdot M'_{2} \} \} > 0
$$
Then for any rational map $g = g_1 / g_2$ in $\mathcal{R}_d$ with 
$$
g_1(z) = f_1(z) + \sum_{i = 0}^{d} \epsilon_i \cdot z^i
$$
and
$$
g_2(z) = f_2(z) + \sum_{j = 0}^{d} \kappa_j \cdot z^j
$$ 
where 
$$
|\epsilon_i| < \xi'_i
$$
and
$$
|\kappa_j| < \zeta'_j
$$
for each $i$ and $j$ in $\{0, 1, \cdots, d\}$, it is also easy to check that 
\begin{align*}
|g'_1(z) \cdot g_2(z) - g_1(z) \cdot g'_2(z)| \cdot \left| {1 \over (g_2(z))^2 } - {1 \over (f_2(z))^2 } \right|
< s
\end{align*}
and
\begin{align*}
{ 1 \over |f_2(z)|^2 } \cdot |g'_1(z) \cdot g_2(z) - g_1(z) \cdot g'_2(z) - f'_1(z) \cdot f_2(z) + f_1(z) \cdot f_2(z) | < s,
\end{align*}
thus we have 
$$
|f'(z) - g'(z)| < s
$$
for any $z$ in $\Omega$.

Therefore, by choosing a small enough neighborhood of $f$ in $\mathcal{R}_d$, we complete our proof.
\end{proof}

Now we can show the set of rational maps preserving a given subset is open in the set of rational maps if the subset is bounded and closed.

\begin{lemma}\label{4.8}
Let $\mathcal{N}_{\lambda, \Omega}$ be the set of rational maps of degree $d$ preserving a closed subset $\Omega$ and $f$ be an element in $\mathcal{N}_{\lambda, \Omega}$.
Let us assume the conditions as follows:
\begin{enumerate}
\item The rational map $f$ has no critical values and poles in $\Omega$.
\item There exists a $\delta > 0$ such that for any $z$ in $\Omega$, we have
$$
B_{z, \delta} \subset \Omega.
$$
\item There exists an $\eta > 0$ such that
$$
\Omega \subset B_{0, \eta}.
$$
\end{enumerate}
Then there exists a neighborhood $U$ of $f$ in $\mathcal{R}_d$ such that for any rational map $g$ in $U$ and any $z$ in $\Omega$, we have
$$
|f(z) - g(z) | < \mu
$$
and
$$
g \in \mathcal{N}_{\lambda, \Omega}
$$
where 
$$
\mu := \max \{ |k|^{1 / (k - 1)} \mid k \in \{ 2, 3, \cdots\} \} \cdot \delta > 0.
$$
\end{lemma}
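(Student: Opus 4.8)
The plan is to take for $U$ the neighborhood produced by Lemma~\ref{4.7} and to check that it already works. Since $\Omega$ is closed, $f$ has no critical values and poles in $\Omega$, and $\Omega$ satisfies the two ball conditions, Lemma~\ref{4.7} applies; choosing there $r = \mu$ and $s = \lambda$, we obtain a neighborhood $U$ of $f$ in $\mathcal{R}_d$ such that every $g$ in $U$ has no poles in $\Omega$ and satisfies $|f(z) - g(z)| < \mu$ and $|f'(z) - g'(z)| < \lambda$ for all $z$ in $\Omega$. The first inequality is the first assertion of the lemma. To conclude $g \in \mathcal{N}_{\lambda, \Omega}$ we must verify, for each $z$ in $\Omega$, that $|g'(z)| \geq \lambda$ and $g^{-1}(\{z\}) \subset \Omega$. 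The first is immediate: since $|f'(z)| \geq \lambda > |f'(z) - g'(z)|$, the ultrametric inequality gives $|g'(z)| = |f'(z)| \geq \lambda$. Thus everything reduces to proving the backward invariance $g^{-1}(\{z\}) \subset \Omega$.

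Fix $z$ in $\Omega$. As $z$ lies in $\Omega$, on which $f$ has no critical values, it is not a critical value of $f$, so the fiber $f^{-1}(\{z\})$ consists of $d$ distinct points $z_1, \dots, z_d$; these lie in $\Omega \subset K$ because $f \in \mathcal{N}_{\lambda, \Omega}$, so in particular they are finite and $B_{z_k, \delta} \subset \Omega$ for each $k$. Put $s_k := \mu / |f'(z_k)|$; since $|f'(z_k)| \geq \lambda > 1$ and $\mu \leq \delta$, we have $s_k < \mu \leq \delta$, hence $B_{z_k, s_k} \subset \Omega$. Applying Lemma~\ref{4.2} to $f$ with $r = \mu$, the restriction $f \colon B_{z_k, s_k} \to B_{z, \mu}$ is a homeomorphism and $f^{-1}(B_{z, \mu}) = \bigsqcup_{k=1}^{d} B_{z_k, s_k}$; in particular the balls $B_{z_1, s_1}, \dots, B_{z_d, s_d}$ are pairwise disjoint.

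The crux is to show that $g$ still maps $B_{z_k, s_k}$ onto $B_{z, \mu}$, so that each of these balls contains a point of $g^{-1}(\{z\})$. On the pole-free ball $B_{z_k, \delta}$, expand $g(w) = g(z_k) + \sum_{n \geq 1} c_n (w - z_k)^n$ and $f(w) = z + \sum_{n \geq 1} a_n (w - z_k)^n$. By Corollary~\ref{2.10}, $g(B_{z_k, s_k}) = B_{g(z_k), R_k}$ with $R_k = \max\{ |c_n| s_k^n : n \geq 1 \}$, and the claim is $R_k = \mu$. For $n = 1$: $c_1 = g'(z_k)$, $a_1 = f'(z_k)$, and $|c_1| = |a_1|$ by the first paragraph, so $|c_1| s_k = |f'(z_k)| \cdot \mu / |f'(z_k)| = \mu$. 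For $n \geq 2$, write $c_n = a_n + e_n$ with $e_n$ the $n$-th Taylor coefficient of $g - f$ at $z_k$: the estimate carried out in the proof of Lemma~\ref{4.2} gives $|a_n| s_k^n < |a_1| s_k = \mu$, while applying Corollary~\ref{2.10} to $g - f$ (which has no poles on $B_{z_k, \delta}$ and satisfies $|g - f| < \mu$ there; or noting $e_n = 0$ if $g - f$ is constant) shows the radius $\max\{|e_n| \delta^n : n \geq 1\}$ of $(g-f)(B_{z_k, \delta})$ is at most $\mu$, so $|e_n| \delta^n \leq \mu$ and hence $|e_n| s_k^n \leq \mu (s_k/\delta)^n < \mu$ because $s_k < \delta$. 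Therefore $|c_n| s_k^n \leq \max(|a_n| s_k^n, |e_n| s_k^n) < \mu$ for $n \geq 2$, and $R_k = |c_1| s_k = \mu$. Finally $|g(z_k) - z| = |g(z_k) - f(z_k)| < \mu$, so $z \in B_{g(z_k), \mu} = g(B_{z_k, s_k})$, i.e.\ there is a point $w_k$ in $B_{z_k, s_k}$ with $g(w_k) = z$.

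To conclude, the $d$ pairwise disjoint balls $B_{z_1, s_1}, \dots, B_{z_d, s_d}$ each meet $g^{-1}(\{z\})$, while $g$ has degree $d$ so that $z$ has at most $d$ preimages; hence $g^{-1}(\{z\}) = \{ w_1, \dots, w_d \} \subset \bigsqcup_{k=1}^{d} B_{z_k, s_k} \subset \Omega$, in particular $\infty \notin g^{-1}(\{z\})$. Since $z$ in $\Omega$ was arbitrary, $g \in \mathcal{N}_{\lambda, \Omega}$, which completes the proof. The step I expect to be the main obstacle is the third paragraph: controlling the Taylor coefficients of $g$ at each $z_k$ by combining the inequalities for $f$ from the proof of Lemma~\ref{4.2} with the uniform closeness $|f - g| < \mu$ through Corollary~\ref{2.10}. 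Once $g(B_{z_k, s_k}) = B_{z, \mu}$ is established, the disjointness from Lemma~\ref{4.2} together with a degree count finishes the argument.
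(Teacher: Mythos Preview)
Your proof is correct and follows the same overall strategy as the paper: apply Lemma~\ref{4.7} with $r=\mu$, $s=\lambda$, deduce $|g'(z)|=|f'(z)|\geq\lambda$ by the ultrametric inequality, and then show that each ball $B_{z_k,\mu/|f'(z_k)|}$ around the $f$-preimages of a point $w\in\Omega$ still contains a $g$-preimage of $w$, finishing by a degree count. The only noteworthy difference is in how you establish $g(B_{z_k,s_k})=B_{w,\mu}$: the paper observes that $|g'|\geq\lambda$ on $B_{z_k,\delta}\subset\Omega$ forces $g'$ to be zero-free there, so Lemma~\ref{2.9} applied directly to $g'$ yields $|c_1|>|n|\,|c_n|\,\delta^{n-1}$ and hence $|c_1|s_k>|c_n|s_k^n$ exactly as in the proof of Lemma~\ref{4.2}; you instead decompose $c_n=a_n+e_n$ and bound the $a_n$ via Lemma~\ref{4.2} and the $e_n$ via Corollary~\ref{2.10} applied to $g-f$. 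Both work, but the paper's route is shorter and avoids having to treat the possibly constant difference $g-f$ separately.
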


\begin{proof}[Proof of Lemma \ref{4.8}]
By Lemma \ref{4.7}, we can choose a neighborhood $V$ of $f$ in $\mathcal{R}_d$ such that for any $g$ in $V$ and any $z$ in $\Omega$, we have
$$
|f(z) - g(z)| < \mu
$$
and 
$$
|f'(z) - g'(z)| < \lambda.
$$
We first then have
$$
|g'(z)| = |g'(z) - f'(z) + f'(z)| = \max \{ |g'(z) - f'(z)|, |f'(z)| \} = |f'(z)|
$$
for any $z$ in $\Omega$.
In particular, this implies that for any $z$ in $\Omega$, we have
$$
|g'(z)| \geq \lambda.
$$

Next, let us choose a $w$ in $\Omega$.
Then there exists a subset $\{ z_1, z_2 ,\cdots, z_d \}$ in $\Omega$ such that for any $k$ in $\{ 1, 2, \cdots, d \}$, we have
$$
f(z_k) = w.
$$
Let us fix a $k$ in $\{ 1, 2, \cdots, d\}$.
Since $f$ has no poles in $B_{z_k, \delta}$, so does $g$ by Lemma \ref{4.7}.
Thus there exists a sequence $\{ a_l \}_{l = 0}^{\infty}$ in $K$ such that for any $z$ in $B_{z_k \delta}$, we have
$$
g(z) - w 
= g(z_k) - f(z_k) + a_1 \cdot (z - z_k) + a_2 \cdot (z - z_k)^2 + \cdots
$$
and
$$
g'(z) = a_1 + 2 \cdot a_2 \cdot (z - z_k) + \cdots.
$$
Since
$$
|g'(z)| \geq \lambda > 0
$$
for any $z$ in $\Omega$,
the first derivative $g'$ of the rational map $g$ has no zeros in $B_{z_k, \delta}$.
It follows from Lemma \ref{2.9} that for any $l$ in $\{ 2, 3, \cdots \}$, we have
\begin{align*}
|a_1| \cdot {\mu \over |f'(z_k)|} &> |a_l| \cdot |l| \cdot \delta^{l-1} \cdot {\mu \over |f'(z_k)|} \\
&\geq |a_l| \cdot \mu^{l-1} \cdot {\mu \over |f'(z_k)|}
\geq |a_l| \cdot \left({\mu \over |f'(z_k)|}\right)^l
\end{align*}
On the other hand, we have
$$
|f(z_k) - g(z_k)| < \mu = |a_1| \cdot {\mu \over |g'(z_k)|} = |a_1| \cdot {\mu \over |f'(z_k)|}
$$
Thus it follows from Lemma \ref{2.9} that the restriction map 
$$
g | B_{z_k, \mu / |f'(z_k)|} \rightarrow B_{w, \mu}
$$
is bijective so there exists a unique $w_k$ in $B_{z_k, \mu / |f'(z_k)|}$ such that
$$
g(w_k) = z_k
$$
Therefore we conclude
\begin{align*}
g^{-1}(\{ w \} ) \subset \Omega.
\end{align*}
\end{proof}

Let us close this section with a discussion on repelling periodic points.

\begin{lemma}\label{4.9}
Let $\mathcal{N}_{\lambda, \Omega}$ be the set of rational maps of degree $d$ preserving a subset $\Omega$ and $f$ be an element in $\mathcal{N}_{\lambda, \Omega}$.
Let us assume the conditions as follows:
\begin{enumerate}
\item The Julia set $\mathcal{J}_f$ of $f$ is non-empty. 
\item The Julia set $\mathcal{J}_f$ of $f$ is contained in $\Omega$.
\item There exists a $\delta > 0$ such that for any $z$ in $\Omega$, we have
$$
B_{z, \delta} \subset \Omega.
$$
\end{enumerate}
Then there exists a repelling periodic point of $f$ in $\Omega_{f}$.
\end{lemma}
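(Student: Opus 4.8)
The plan is to produce the periodic point from the density of periodic points in the Julia set (Lemma~\ref{2.8}(2)) and then to observe that, thanks to the two defining properties of $\mathcal{N}_{\lambda,\Omega}$ --- the derivative bound $|f'(z)|\geq\lambda$ on $\Omega$ and the backward invariance $f^{-1}(\{z\})\subset\Omega$ for $z\in\Omega$ --- such a point is automatically repelling and automatically trapped inside $\Omega_{\infty,f}$.

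First I would record that $\Omega$ contains a genuine neighbourhood of $\mathcal{J}_f$: by hypotheses (2) and (3), $\bigcup_{z\in\mathcal{J}_f}B_{z,\delta}\subset\Omega$, and in particular $\mathcal{J}_f\subset K$, so $\infty\notin\mathcal{J}_f$. Using hypothesis (1), fix some $z_0\in\mathcal{J}_f$. By Lemma~\ref{2.8}(2), $z_0$ lies in the topological closure of the set of periodic points of $f$; since the chordal metric $\rho$ and the norm $|\cdot|$ induce the same topology on $K$, there is a periodic point $p\in K$, say with $f^{q}(p)=p$, satisfying $|p-z_0|\leq\delta$, hence $p\in B_{z_0,\delta}\subset\Omega$.

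Next I would show that the whole forward orbit of $p$ stays inside $\Omega$. Iterating the inclusion $f^{-1}(\{w\})\subset\Omega$, which holds for every $w\in\Omega$ because $f\in\mathcal{N}_{\lambda,\Omega}$, gives $f^{-k}(\{w\})\subset\Omega$ for all $k\geq0$ and all $w\in\Omega$; taking $w=p$ yields $\bigcup_{k\geq0}f^{-k}(\{p\})\subset\Omega$. On the other hand, for each $j\in\{0,1,\dots,q-1\}$ we have $f^{q-j}(f^{j}(p))=f^{q}(p)=p$, so $f^{j}(p)\in f^{-(q-j)}(\{p\})\subset\Omega$. Thus $\{p,f(p),\dots,f^{q-1}(p)\}\subset\Omega$, and since this finite set is the full periodic orbit of $p$ we get $f^{m}(p)\in\Omega$ for every $m\geq0$; equivalently $p\in f^{-m}(\Omega)=\Omega_{m,f}$ for every $m$, so $p\in\Omega_{\infty,f}$.

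Finally, since no point of the orbit of $p$ equals $\infty$ (the orbit lies in $\Omega\subset K$), the chain rule together with the multiplicativity of $|\cdot|$ gives
$$
|(f^{q})'(p)|=\prod_{j=0}^{q-1}\bigl|f'(f^{j}(p))\bigr|\geq\lambda^{q}>1,
$$
so $p$ is a repelling periodic point of $f$ lying in $\Omega_{\infty,f}$, as required. The only step needing a little care is the passage from the chordal closeness of $p$ to $z_0$ to the norm-containment $p\in B_{z_0,\delta}$; here one must invoke that $\rho$ restricted to $K$ is topologically equivalent to the metric induced by $|\cdot|$, so that a sufficiently chordally close periodic point really lands in the ball $B_{z_0,\delta}$. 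Everything else is a direct unwinding of the definitions of $\mathcal{N}_{\lambda,\Omega}$ and of periodicity, using only Lemma~\ref{2.8}.
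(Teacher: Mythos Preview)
Your proof is correct and follows essentially the same route as the paper's: both use Lemma~\ref{2.8}(2) to find a periodic point $p$ in $B_{z_0,\delta}\subset\Omega$, then use the backward-invariance clause of $\mathcal{N}_{\lambda,\Omega}$ to trap the full orbit $\{p,f(p),\dots,f^{q-1}(p)\}$ inside $\Omega$, and finally apply the chain rule with the bound $|f'|\geq\lambda$ on $\Omega$ to conclude $|(f^{q})'(p)|\geq\lambda^{q}>1$. Your argument is slightly more explicit in noting that the orbit containment in $\Omega$ yields $p\in\Omega_{\infty,f}$, and in flagging the passage from chordal to norm topology, but the core is identical.
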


\begin{proof}[Proof of Lemma \ref{4.9}]
By Lemma \ref{2.8}, we have
$$
\mathcal{J}_f \subset \overline{\{ p \in K \mid \exists q \in \mathbb{N}, f^q(p) = p \}}.
$$
Thus for any $z$ in $\mathcal{J}_f$, there exists a $p$ in $B_{z, \delta}$ and a $q$ in $\mathbb{N}$ such that
$$
f^{q}(p) = p.
$$
Since 
$$
f^{-1}(\Omega) \subset \Omega
$$ 
and 
$$
f^{k}(f^{q-k}(p)) = f^{q}(p) = p,
$$
we have
$$
f^{q - k}(p) \in f^{-k}(\{ p \}) \subset \Omega
$$
for any $k$ in $\{1, 2, \cdots, q-1 \}$. 
Thus we have 
$$
|f'(w)| \geq \lambda
$$ 
for any $w$ in $\{p, f(p), \cdots, f^{q-1}(p) \}$.
Moreover, this implies that
\begin{align*}
|(f^{q})'(p)| = |f'(p)| \cdot |f'(f(p))| \cdot \cdots \cdot |f'(f^{q-1}(p))| > 1.
\end{align*}
Therefore the periodic point $p$ is repelling.
\end{proof}


\section{Proof of Theorem \ref{3.2}}

In this section, we will prove Theorem \ref{3.2}.

\begin{proof}[Proof of Theorem \ref{3.2}]
Let $\lambda$ be a real number such that
$$
|f'(z)| > \lambda
$$ 
for any $z$ in $\mathcal{J}_f$ and $\mathcal{C}_f$ be the set of the critical points, the critical values, and the poles of $f$ in $K$.

Choosing a $\delta > 0$ satisfying 
$$
\bigcup_{ z \in \mathcal{C}_f} B_{z, \delta} \cap \mathcal{J}_f = \emptyset,
$$
we have 
$$
\bigcup_{z \in \mathcal{J}_f} B_{z, \delta} \cap \mathcal{C}_f = \emptyset.
$$

Let us first shows the following lemma.

\begin{lemma}\label{5.1}
Setting 
$$
\Omega := \bigcup_{z \in \mathcal{J}_f} B_{z, \mu}
$$
where 
$$
\mu := \min \{ |k|^{1 /(k - 1)} \mid k \in \{2, 3, \cdots \} \} \cdot \delta > 0,
$$
we have the followings:
\begin{enumerate}
\item The subset $\Omega$ is bounded and closed. \\\
\item For any $z \in \Omega$, 
$$
|f'(z)| \geq \lambda.
$$
\item For any $z \in \Omega$, 
$$
f^{-1}(\{ z \} ) \subset \Omega.
$$
\end{enumerate}
\end{lemma}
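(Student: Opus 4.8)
The plan is to extract all three assertions from the single inequality $\mu \leq \delta$ --- valid because $\min\{|k|^{1/(k-1)} : k \in \{2, 3, \cdots \}\} \leq |2| \leq 1$ --- together with four standing facts: $\mathcal{J}_f$ is closed and completely invariant (Proposition \ref{2.6}); $\mathcal{J}_f$ is bounded, say $\mathcal{J}_f \subset B_{0, \eta}$, because $\infty \in \mathcal{F}_f$ and $\mathcal{F}_f$ is open; $\mathcal{J}_f$ is disjoint from the finite set $\mathcal{C}_f$ of critical points, critical values and poles of $f$ in $K$; and, from the choice of $\delta$ made just above, $B_{z, \delta} \cap \mathcal{C}_f = \emptyset$ for every $z \in \mathcal{J}_f$. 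Note that $\mu \leq \delta$ already gives $\mathcal{C}_f \cap \Omega = \emptyset$: a point of $\mathcal{C}_f \cap \Omega$ would lie in some $B_{z_0, \mu} \subset B_{z_0, \delta}$ with $z_0 \in \mathcal{J}_f$, contradicting the choice of $\delta$. For (1), boundedness is immediate since each $B_{z, \mu}$ with $z \in \mathcal{J}_f$ lies in $B_{0, \max\{\eta, \mu\}}$; for closedness I would observe that $\Omega$ is a union of closed balls all of radius $\mu$, hence has open complement, since if $w \notin \Omega$ and some $w'' \in B_{w, \mu} \cap \Omega$ lay in $B_{z_0, \mu}$ with $z_0 \in \mathcal{J}_f$, then the ultrametric inequality would give $|w - z_0| \leq \mu$, whence $w \in B_{z_0, \mu} \subset \Omega$.

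For (2), given $z \in \Omega$ choose $z_0 \in \mathcal{J}_f$ with $z \in B_{z_0, \mu} \subset B_{z_0, \delta}$. Since $B_{z_0, \delta}$ avoids $\mathcal{C}_f$, the rational map $f'$ --- non-constant because $d > 2$ --- has no zero in $B_{z_0, \delta}$, its zeros being the critical points of $f$, and no pole there, its poles being among the poles of $f$. Proposition \ref{2.3}(2) applied to $f'$ then gives $|f'(z)| = |f'(z_0)| > \lambda$.

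For (3), fix $z \in \Omega$ and $z_0 \in \mathcal{J}_f$ with $z \in B_{z_0, \mu}$. By complete invariance $f^{-1}(\{ z_0 \}) \subset \mathcal{J}_f$, and since no ball $B_{w, \delta}$ with $w \in \mathcal{J}_f$ meets $\mathcal{C}_f$, none of these preimages is a critical point, so $f^{-1}(\{ z_0 \}) = \{ w_1, \cdots, w_d \}$ consists of $d$ distinct points, each lying in a pole-free ball $B_{w_k, \delta}$ on which $f'$ does not vanish. Repeating the power-series estimate from the proof of Lemma \ref{4.2} on each $B_{w_k, \delta}$ --- that estimate uses only the absence of poles of $f$ and of zeros of $f'$ on the ball --- shows that for every $r \leq \mu$ the restriction $f | B_{w_k, r / |f'(w_k)|} \to B_{z_0, r}$ is a homeomorphism. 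In particular $f$ is injective on each $B_{w_k, \mu / |f'(w_k)|}$, which forces the $d$ balls $B_{w_k, \mu / |f'(w_k)|}$ to be pairwise disjoint: two intersecting ultrametric balls are nested, so a common point would put some centre $w_j$ into another of the balls, contradicting injectivity there since $f(w_j) = z_0 = f(w_k)$ while $w_j \neq w_k$. Each of these $d$ disjoint balls maps onto $B_{z_0, \mu}$, so every point of $B_{z_0, \mu}$, in particular $z$, has at least $d$ distinct preimages among them; since a rational map of degree $d$ assumes any value in $\mathbb{P}^1_K$ at most $d$ times, these are all of the preimages of $z$. Hence, using $|f'(w_k)| > \lambda > 1$,
$$
f^{-1}(\{ z \}) \subset \bigcup_{k = 1}^{d} B_{w_k, \mu / |f'(w_k)|} \subset \bigcup_{k = 1}^{d} B_{w_k, \mu} \subset \Omega.
$$

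The main obstacle is (3), specifically the passage from the local homeomorphisms near the preimages of $z_0$ to the global identification of $f^{-1}(B_{z_0, \mu})$ with a disjoint union of $d$ small balls. This is exactly where the calibration of $\mu$ through $\min\{|k|^{1/(k-1)}\}$ is essential --- it makes the linear term dominate the Taylor expansion of $f$ on each $B_{w_k, \delta}$, just as in Lemma \ref{4.2} --- and where the degree count is needed both to exhaust the finite preimages and to rule out $\infty$ as a preimage. Everything else is ultrametric bookkeeping resting on the inequality $\mu \leq \delta$.
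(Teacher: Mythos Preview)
Your proof is correct and follows essentially the same route as the paper: boundedness and closedness from the ultrametric, constancy of $|f'|$ on balls avoiding $\mathcal{C}_f$ via Proposition~\ref{2.3}(2), and for (3) the local bijectivity of $f$ on small balls around the preimages $w_k \in \mathcal{J}_f$ of $z_0$ combined with a degree count. You are in fact more careful than the paper in one respect: the paper invokes Lemma~\ref{4.2} directly, whose hypotheses already include $f \in \mathcal{N}_{\lambda,\Omega}$ and hence presuppose (3), whereas you correctly observe that only the power-series estimate from its \emph{proof} is needed, and that estimate depends solely on the absence of poles of $f$ and of zeros of $f'$ on $B_{w_k,\delta}$.
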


\begin{proof}[Proof of Lemma \ref{5.1}]
Since $\mathcal{J}_f$ is bounded, it is clear that $\Omega$ is bounded.
It follows from the construction that $\Omega$ is also closed because $| \cdot |$ is non-Archimedean.

For any $z$ in $\Omega$, there exists a $w$ in $\mathcal{J}_f$ such that 
$$
z \in B_{w, \mu}.
$$
Since $B_{w, \mu}$ has no critical points of $f$, it follows from Proposition \ref{2.3} that
$$
|f'(z)| = |f'(w)| \geq \lambda.
$$
Moreover, by Proposition \ref{2.6}, there exists a subset $\{w_1, w_2, \cdots, w_d \}$ of $\mathcal{J}_f$ such that for any $l$ in $\{ 1, 2, \cdots, d\}$, we have
$$
f(w_l) = w.
$$
It follows from Lemma \ref{4.2} that for any distinct $l$ and $m$ in $\{ 1, 2, \cdots, d\}$, the restriction map
$$
f | B_{w_l, \mu/ |f'(w_l)| } \rightarrow B_{w, \mu}
$$ 
is homeomorphic and
$$
B_{w_l, \mu / |f'(w_l)|} \cap B_{w_m, \mu / |f'(w_m)|} = \emptyset.
$$
This implies that 
$$
f^{-1}(\{ z \}) \subset \bigcup_{n = 1}^{d} B_{w_n, \mu / |f'(w_n)|} \subset \Omega.
$$
\end{proof}

Now let us consider the set $\mathcal{N}_{\lambda, \Omega}$ of rational maps of degree $d$ preserving the subset $\Omega$.
It follows from Lemma \ref{5.1} that $\mathcal{N}_{\lambda, \Omega}$ contains $f$ so $\mathcal{N}_{\lambda, \Omega}$ is non-empty.
By Lemma \ref{4.4}, we have the sequence $\{ \Omega_{m, f} \}_{m = 0}^{\infty}$ of subsets generated by $\Omega$ and $f$ such that for any $m$ in $\{ 0, 1, \cdots \}$, we have
$$
\Omega_{\infty, f} \subset \cdots \subset \Omega_{m+1, f} \subset \Omega_{m, f} \subset \cdots \subset \Omega_{1, f} \subset \Omega_{0, f}.
$$
Since for any $m$ in $\{ 0, 1, \cdots \}$
$$
\mathcal{J}_f \subset \Omega_{m, f},
$$
we have 
$$
\mathcal{J}_f \subset \Omega_{\infty, f}.
$$
In particular, the Julia set $\mathcal{J}_f$ of $f$ is non-empty so is the limit term $\Omega_{\infty, f}$.

\begin{lemma}\label{5.2}
Let $g$ be an element in $\mathcal{N}_{\lambda, \Omega}$.
Suppose that for any $z$ in $\Omega$, we have
$$
|f(z) - g(z)| \leq \mu.
$$
Then we have the followings:
\begin{enumerate}
\item The Julia set $\mathcal{J}_g$ of $g$ is non-empty.
\item There exists a homeomorphism $h : \mathcal{J}_f \rightarrow \mathcal{J}_g$ such that for any $z$ in $\mathcal{J}_{f}$,
$$
h \circ f (z) = g \circ h (z).
$$
\end{enumerate}
\end{lemma}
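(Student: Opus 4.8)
The plan is to obtain $h$ as the restriction to $\mathcal{J}_f$ of a topological conjugacy defined on the larger, completely invariant set $\Omega_{\infty,f}$, and then to identify its image with $\mathcal{J}_g$ by means of the backward-orbit description of the Julia set. First I would check that $f$ and $g$ satisfy the hypotheses of Lemma \ref{4.5} relative to $\Omega$: by the choice of $\delta$ made just before Lemma \ref{5.1} the map $f$ has no critical values or poles in $\Omega$, the set $\Omega$ contains a fixed closed ball around each of its points since $|\cdot|$ is non-Archimedean, and $|f(z) - g(z)| \le \mu$ on $\Omega$ is exactly the hypothesis of Lemma \ref{5.2}. Lemma \ref{4.5} then yields homeomorphisms $h_l : \Omega_{l,f} \to \Omega_{l,g}$ with $h_l \circ f = g \circ h_{l+1}$ on $\Omega_{l+1,f}$, together with the geometric bounds $|h_{l+1} - h_l| \le \mu/\lambda^l$ and $|h_{l+1}^{-1} - h_l^{-1}| \le \mu/\lambda^l$. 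Since $\Omega$ is bounded and closed by Lemma \ref{5.1}, each $\Omega_{l,f} = f^{-l}(\Omega)$ is closed and $\Omega_{\infty,f}$ is a closed bounded, hence complete, subset of $K$, so Lemma \ref{4.6} promotes the $h_l$ to a homeomorphism $h_\infty : \Omega_{\infty,f} \to \Omega_{\infty,g}$ with $h_\infty \circ f = g \circ h_\infty$ on $\Omega_{\infty,f}$.

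Next I would produce a repelling periodic point and transport it. The hypotheses of Lemma \ref{4.9} hold for $f$, since $\mathcal{J}_f$ is non-empty, is contained in $\Omega$, and $\Omega$ is thick; hence there is a repelling periodic point $p$ of $f$, say of period $q$, lying in $\Omega_{\infty,f}$, and $p \in \mathcal{J}_f$ by Lemma \ref{2.8}. Applying $h_\infty$ and using the conjugacy and the complete invariance of $\Omega_{\infty,g}$ under $g$ from Lemma \ref{4.4}, the point $h_\infty(p)$ is a periodic point of $g$ of period $q$ whose whole $g$-orbit lies in $\Omega_{\infty,g} \subset \Omega$; since $g \in \mathcal{N}_{\lambda,\Omega}$ forces $|g'| \ge \lambda$ along that orbit, we get $|(g^q)'(h_\infty(p))| \ge \lambda^q > 1$, so $h_\infty(p)$ is a repelling periodic point of $g$ and therefore $h_\infty(p) \in \mathcal{J}_g$ by Lemma \ref{2.8}. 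In particular $\mathcal{J}_g \ne \emptyset$, which is assertion (1).

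For assertion (2) I would show $h_\infty(\mathcal{J}_f) = \mathcal{J}_g$. Iterating the conjugacy, and using $f(\Omega_{\infty,f}) = \Omega_{\infty,f}$, gives $h_\infty \circ f^k = g^k \circ h_\infty$ on $\Omega_{\infty,f}$ for all $k$; since $h_\infty$ is a bijection onto $\Omega_{\infty,g}$ while $f^{-k}(\{p\}) \subset \Omega_{\infty,f}$ and $g^{-k}(\{h_\infty(p)\}) \subset \Omega_{\infty,g}$ by complete invariance, a short check gives $h_\infty\big(f^{-k}(\{p\})\big) = g^{-k}(\{h_\infty(p)\})$ for every $k$, so $h_\infty$ carries $\bigcup_k f^{-k}(\{p\})$ onto $\bigcup_k g^{-k}(\{h_\infty(p)\})$. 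By Proposition \ref{2.6}, $\mathcal{J}_f$ is the closure in $\mathbb{P}^1_K$ of the first union and $\mathcal{J}_g$ the closure of the second; as $\Omega_{\infty,f}$ and $\Omega_{\infty,g}$ are closed, these coincide with the closures taken inside $\Omega_{\infty,f}$ and $\Omega_{\infty,g}$, which the homeomorphism $h_\infty$ respects. Hence $h_\infty(\mathcal{J}_f) = \mathcal{J}_g$, and $h := h_\infty|_{\mathcal{J}_f}$ is the desired homeomorphism, with $h \circ f = g \circ h$ on $\mathcal{J}_f$ inherited from $h_\infty$ because $f(\mathcal{J}_f) = \mathcal{J}_f \subset \Omega_{\infty,f}$.

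The main obstacle is precisely this last transfer: Lemmas \ref{4.5} and \ref{4.6} only build a conjugacy on the auxiliary set $\Omega_{\infty,f}$, which is known to contain $\mathcal{J}_f$ but has no a priori reason to be carried exactly onto $\mathcal{J}_g$. The point that makes it work is that each Julia set is pinned down by a single repelling cycle — obtained inside $\Omega_{\infty,f}$ via Lemma \ref{4.9} and moved into $\Omega_{\infty,g}$ by the conjugacy — through the backward-orbit-closure formula of Proposition \ref{2.6}; the only subtlety to handle carefully is that the closure there is taken in $\mathbb{P}^1_K$, which causes no difficulty only because $\Omega_{\infty,f}$ and $\Omega_{\infty,g}$ are themselves closed.
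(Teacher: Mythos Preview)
Your proof is correct and follows essentially the same route as the paper: obtain $h_\infty:\Omega_{\infty,f}\to\Omega_{\infty,g}$ from Lemmas~\ref{4.5} and~\ref{4.6}, transport a repelling periodic point of $f$ (supplied by Lemma~\ref{4.9}) to a repelling periodic point of $g$, and then use the backward-orbit-closure description of the Julia set (Proposition~\ref{2.6}) to conclude $h_\infty(\mathcal{J}_f)=\mathcal{J}_g$. Your write-up is in fact more careful than the paper's in verifying the hypotheses of Lemma~\ref{4.5} and in flagging the closure-in-$\mathbb{P}^1_K$ versus closure-in-$\Omega_{\infty,f}$ subtlety.
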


\begin{proof}[Proof of Lemma \ref{5.2}]
By Lemma \ref{4.5} and Lemma \ref{4.6}, there exists a homeomorphism $h_{\infty} : \Omega_{\infty, f} \rightarrow \Omega_{\infty, g}$ such that for any $z$ in $\Omega_{\infty, f}$, we have
$$
h_{\infty} \circ f (z) = g \circ h_{\infty} (z).
$$
In particular, since $\Omega_{\infty, f}$ is non-empty, so is $\Omega_{\infty, g}$.
On the other hand, by Lemma \ref{4.9}, there exists a repelling periodic point $p$ of $f$ in the Julia set $\mathcal{J}_f$ of $f$ with period $q$.
Since $g$ is in $\mathcal{N}_{\lambda, \Omega}$ and
$$
g^{q}(h_\infty(p)) = h_{\infty}(p),
$$
we have
$$
|(g^q)'(h_{\infty}(p))| > 1.
$$
Thus the Julia set $\mathcal{J}_g$ of $g$ is non-empty by Lemma \ref{2.8}.
Furthermore, it follows from Proposition \ref{2.6} that
\begin{align*}
h_{\infty} (\mathcal{J}_f) 
&= h_{\infty} (\overline{\bigcup_{l = 0}^{\infty} {(f^{q})}^{-l}(\{ p \})}) \\
&= \overline{\bigcup_{l = 0}^{\infty} h_{\infty} ( {(f^{q})}^{-l}(\{ p \}))} \\
&= \overline{\bigcup_{l = 0}^{\infty} (g^{q})^{-l} (\{ h_{\infty} (p) \})}
= \mathcal{J}_{g}.
\end{align*}
Thus the restriction map
\begin{align*}
h : \mathcal{J}_f &\rightarrow \mathcal{J}_g, \\
z &\mapsto h_{\infty}(z),
\end{align*}
is well-defined and homeomorphic such that
$$
h \circ f (z) = g \circ h (z)
$$ 
for any $z$ in $\mathcal{J}_f$.
\end{proof}

Let us conclude that $f$ is $J$-stable in $\mathcal{R}_d$ by the following lemma.

\begin{lemma}\label{5.3}
There exists a neighborhood $U$ of $f$ such that for any $g$ in $U$, there exists a homeomorphism $h : \mathcal{J}_f \rightarrow \mathcal{J}_g$ such that for any $z$ in $\mathcal{J}_f$, we have
$$
h \circ f (z) = g \circ h (z).
$$
\end{lemma}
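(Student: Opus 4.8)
The plan is to obtain $U$ by a single application of Lemma~\ref{4.8} to the set $\Omega$ and the map $f$ fixed at the start of this section, and then to feed each $g$ in $U$ into Lemma~\ref{5.2}. So the first task is to check that the pair $(\Omega, f)$, together with the constant $\lambda$, satisfies the hypotheses of Lemma~\ref{4.8}. By Lemma~\ref{5.1} the set $\Omega = \bigcup_{z \in \mathcal{J}_f} B_{z, \mu}$ is bounded and closed; since it is bounded there is an $\eta > 0$ with $\Omega \subset B_{0, \eta}$, and this is the one point at which the hypothesis $\infty \notin \mathcal{J}_f$ is used, because the Fatou set is open and every chordal neighborhood of $\infty$ is the complement of a closed ball centered at $0$. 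Because $\mathcal{C}_f$ is finite and $\delta$ was chosen with $\bigcup_{z \in \mathcal{C}_f} B_{z, \delta}$ disjoint from $\mathcal{J}_f$, symmetry of ultrametric balls gives $\bigcup_{z \in \mathcal{J}_f} B_{z, \delta} \cap \mathcal{C}_f = \emptyset$, and since $\mu \le \delta$ this yields $\Omega \cap \mathcal{C}_f = \emptyset$, so $f$ has no critical values or poles in $\Omega$. Finally, if $w \in \Omega$ then $w \in B_{z, \mu}$ for some $z \in \mathcal{J}_f$, whence $B_{w, \mu} = B_{z, \mu} \subset \Omega$, so the thickening hypothesis holds with the radius $\mu$ in the role of $\delta$; and $f \in \mathcal{N}_{\lambda, \Omega}$ is precisely parts (2) and (3) of Lemma~\ref{5.1}.

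With these hypotheses in place, Lemma~\ref{4.8} produces a neighborhood $U$ of $f$ in $\mathcal{R}_d$ such that every $g$ in $U$ belongs to $\mathcal{N}_{\lambda, \Omega}$ and satisfies $|f(z) - g(z)| < \mu$ for all $z$ in $\Omega$. Now fix $g$ in $U$. Then $g \in \mathcal{N}_{\lambda, \Omega}$ and $|f(z) - g(z)| \le \mu$ on $\Omega$, which are exactly the hypotheses of Lemma~\ref{5.2}; that lemma then supplies a homeomorphism $h : \mathcal{J}_f \rightarrow \mathcal{J}_g$ with $h \circ f(z) = g \circ h(z)$ for every $z$ in $\mathcal{J}_f$. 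Since $g$ was an arbitrary element of $U$, this is precisely the statement of Lemma~\ref{5.3}; combined with the description of the topology on $\mathcal{R}_d$ as the one induced from the projective space over $K$ in which $\mathcal{R}_d$ naturally sits, and with the remark recorded in Lemma~\ref{4.7}, it completes the proof of Theorem~\ref{3.2}.

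Essentially all of the real work has by this point been absorbed into Lemmas~\ref{4.2}, \ref{4.5}, \ref{4.6}, \ref{4.8} and \ref{5.2}, so the only genuine content of this last step is the bookkeeping of the first paragraph: that the particular $\Omega$ constructed at the start of the section is bounded, thick, and free of the critical values and poles of $f$. Accordingly there is no serious obstacle here. The one subtlety worth flagging is that Lemma~\ref{4.8} is stated with $\mu$ defined as a maximum over $\{ |k|^{1/(k-1)} \mid k \in \{2, 3, \cdots\} \}$, whereas Lemma~\ref{5.2} and the rest of this section use the minimum; since those two lemmas must be chained through the common hypothesis $|f - g| \le \mu$ of Lemma~\ref{5.2}, the constant is to be read as the minimum throughout, which is exactly the positive quantity whose existence is guaranteed by Proposition~\ref{2.11}.
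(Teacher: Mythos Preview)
Your proof is correct and follows exactly the paper's approach: invoke Lemma~\ref{4.8} to obtain the neighborhood $U$, then feed each $g \in U$ into Lemma~\ref{5.2}. You in fact do slightly more than the paper, which simply asserts Lemma~\ref{4.8} applies without rechecking the hypotheses for $\Omega$; your verification that the thickening radius is $\mu$ rather than the original $\delta$, and your observation about the \textup{max}/\textup{min} typo in Lemma~\ref{4.8}, are both on point.
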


\begin{proof}[Proof of Lemma \ref{5.3}]
By Lemma \ref{4.8}, there exists a neighborhood $U$ of $f$ in $\mathcal{R}_d$ such that for any $g$ in $U$ and $z$ in $\Omega$, we have
$$
g \in \mathcal{N}_{\lambda, \Omega}
$$
and
$$
|f(z) - g(z)| < \mu
$$
where 
$$
\mu = \min \{ |k|^{1 /(k - 1)} \mid k \in \{2, 3, \cdots \} \} \cdot \delta > 0.
$$

Therefore by Lemma \ref{5.2}, the Julia set $\mathcal{J}_g$ of $g$ is non-empty and there exists a homeomorphism $h : \mathcal{J}_f \rightarrow \mathcal{J}_g$ such that for any $z$ in $\mathcal{J}_{f}$, we have
$$
h \circ f (z) = g \circ h (z).
$$
\end{proof}

By Lemma \ref{5.3}, the immediately expanding rational map $f$ of degree $d$ is $J$-stable in the set $\mathcal{R}_d$ of rational maps of degree $d$.
\end{proof}



\bibliographystyle{amsalpha}


\end{document}